\newcommand{\dom}{\mathrm{dom}}
\newcommand{\IR}{\mathbb R}
\newcommand{\defeq}{\coloneqq}
\newcommand{\F}{\mathcal F}
\newcommand{\w}{\omega}
\newcommand{\e}{\varepsilon}
\newcommand{\Tau}{\mathcal T}
\newcommand{\Stra}{{\$}}
\newcommand{\Strat}{\mbox{\texteuro}}
\newcommand{\diam}{\mathrm{diam}}
\newcommand{\IN}{\mathbb N}
\newtheorem{theorem}{Theorem}[section]
\newtheorem{proposition}[theorem]{Proposition}
\newtheorem{lemma}[theorem]{Lemma}
\newtheorem{claim}[theorem]{Claim}
\newtheorem{corollary}[theorem]{Corollary}
\newtheorem{example}[theorem]{Example}
\theoremstyle{definition}
\newtheorem{definition}{Definition}
\title{Game extensions of floppy graph metrics}
\author{Taras Banakh and Pietro Majer}
\address{T.Banakh: Ivan Franko National University of Lviv (Ukraine), and Jan Kochanowski University in Kielce (Poland)}
\email{t.o.banakh@gmail.com}
\address{P.Majer: Universit\`a di Pisa, Dipartimento di Matematica}
\email{pietro.majer@unipi.it}
\subjclass{05C12; 05C57; 54E35; 91A43}
\keywords{graph metric, full metric, floppy graph metric, metric-extending game}
\begin{document}
\begin{abstract} A {\em graph metric}  on a set $X$ is any function $d: E_d \to\IR_+\defeq\{x\in\IR:x>0\}$ defined on a connected graph $ E_d \subseteq[X]^2\defeq\{A\subseteq X:|A|=2\}$ and such that for every $\{x,y\}\in E_d$ we have $d(\{x,y\})\le\hat d(x,y)\defeq\inf\big\{\sum_{i=1}^nd(\{x_{i-1},x_i\}):\{x,y\}=\{x_0,x_n\}\;\wedge\;\{\{x_{i-1},x_i\}:0<i\le n\}\subseteq E_d \big\}$. A graph metric $d$ is called a {\em full metric} on $X$ if $ E_d =[X]^2$. A graph metric $d: E_d \to\bar\IR_+$ is {\em floppy} if $\hat d(x,y)>\check d(x,y)\defeq \sup\{d(\{a,b\})-\hat d(a,u)-\hat d(b,y):\{a,b\}\in E_d \}$ for every $x,y\in X$ with $\{x,y\}\notin E_d $. We prove that for every floppy graph metric $d: E_d \to\IR_+$ on a set $X$, every  points $x,y\in X$ with $\{x,y\}\notin E_d $, and every real number $r$ with $\frac 13\check d(x,y)+\frac23\hat d(x,y)\le r<\hat d(x,y)$ the function $d\cup\{\langle\{x,y\},r\rangle\}$ is a floppy graph metric. This implies that for every floppy graph metric $d: E_d \to\IR_+$ with countable set $[X]^2\setminus E_d $ and for every indexed family $(F_e)_{e\in[X]^2\setminus  E_d }$ of dense subsets of $\IR_+$, there exists an injective function $r\in\prod_{e\in[X]^2\setminus E_d}F_e$ such that $d\cup r$ is a full metric.  Also, we prove that the latter result does not extend to partial metrics defined on uncountable sets. 
\end{abstract}
\maketitle


\section{Introduction}

Problems of extension of metric to metrics to larger domains are classical in Topology and Metric Geometry \cite{BB, BSTZ,Bing,H,Tor,TZ}. Recently this problem was studied in the framework of Metric Graph Theory, see \cite{DMV}, \cite{P}.  
In this paper we explore the problem of extension of a weight function on a graph to a metric that obeys some restrictions on distances between points.  The main result on the existence of such an extension is formulated in terms on an infinite game, called the metric-extending game. This result has been applied in the construction of exotic Banakh spaces in \cite{Banakh}.

We start with recalling some definitions. Let $$\IR_+\defeq\{x\in\IR:x>0\}\quad\mbox{and}\quad\bar\IR_+\defeq\{x\in\IR:x\ge0\}$$be the open and closed half-lines, respectively. For two sets $X,Y$ we write $X\subset Y$ if $X\subseteq Y$ and $X\ne Y$. We denote by $|X|$ the cardinality of a set $X$. For a function $f:X\to Y$ and a set $A$, let $f[A]\defeq\{f(a):a\in A\cap X\}$.  Every function $f:X\to Y$ is identified with its graph $\{\langle x,y\rangle\in X\times Y:y=f(x)\}$. After such indentification, we can operate with functions as with sets, i.e., take unions of functions, intersections, etc. 

\begin{definition} A {\em pseudometric} on a set $X$ is any function $d:X\times X\to\IR$ satisfying the axioms:
\begin{enumerate}
\item $d(x,x)=0$
\item $d(x,y)=d(y,x)$
\item $d(x,z)\le d(x,y)+d(y,z)$
\end{enumerate}
for every $x,y,z\in X$.
A pseudometric $d:X\times X\to\IR$ is a {\em metric} if $d(x,y)>0$ for any distinct points $x,y\in X$. 
\end{definition}

A {\em metric space} is a set $X$ endowed with a metric $d:X\times X\to\IR$. A metric space $X$ is 
\begin{itemize}
\item {\em discrete} if for every $x\in X$ there exists $\e\in\IR_+$ such that $\{y\in X:d(x,y)<\e\}=\{x\}$;
\item {\em separable} if there exists a countable subset $A\subseteq X$ such that for every $x\in X$ and $\e\in\IR_+$,  there exists a point $a\in A$ with $d(a,x)<\e$.
\end{itemize}
It is well-known (and easy to see) that a discrete metric space is separable if and only if it is countable.
\smallskip

A metric on a set $X$ can be also thought as a function on the complete graph $$[X]^2\defeq\{A\subseteq X:|A|=2\}.$$ For two elements $x,y\in X$ it will be convenient to denote the set $\{x,y\}$ by $xy$. So, $[X]^2=\{xy:x,y\in X, \;x\ne y\}$.

\begin{definition} A {\em graph} is a set $E$ such that every element $e\in E$ is a set of cardinality $|e|=2$. Elements of a graph $E$ are called {\em edges} of $E$. The set $V\defeq\bigcup E$ is called the set of {\em vertices} of the graph. A graph $E$ is {\em connected} if for any vertices $x,y\in V$ there exists a sequence $x_0,\dots,x_n\in V$ such that $xy=x_0x_n$ and $\{x_{i-1}x_i:0<i\le n\}\subseteq E$.
\end{definition}


\begin{definition} A {\em graph pseudometric} is a function $d:E_d\to\bar \IR_+$ defined on a connected graph $E_d$ such that  for every vertices $x_0,\dots,x_n$ of $E_d$ with $\{x_0x_n\}\cup\{x_{i-1}x_i:0<i\le n\}\subseteq E_d$ the {\em polygonal inequality} $d(x_0x_n)\le\sum_{i=1}^nd(x_{i-1}x_i)$ holds. 
For a  graph pseudometric $d:E_d\to\bar\IR_+$ we denote by $V_d$ the set $\bigcup E_d$ of vertices of the graph $E_d$.
\vskip3pt

A graph pseudometric $d$ is called 
\begin{itemize}
\item a {\em graph metric} if $d(xy)>0$ for every edge $xy\in E_d$;
\item a {\em full pseudometric} if $E_d=[V_d]^2$;
\item a {\em full metric} if $E_d=[V_d]^2$ and $d(x,y)>0$ for every $xy\in  E_d$.
\end{itemize}
A {\em graph pseudometric on a set} $X$ is a graph pseudometric $d$ with $V_d =X$.
\end{definition}

Every full (pseudo)metric $d:E_d\to\bar\IR_+$  determines a (pseudo)metric $$D:V_d\times V_d\to\IR,\quad D(x,y)\defeq\begin{cases}d(xy)&\mbox{if $x\ne y$};\\
0&\mbox{otherwise}.
\end{cases}
$$
Conversely, every (pseudo)metric $D:X\times X\to\IR$ on a set $X$ determines the full (pseudo)metric $d:[X]^2\to\IR_+$, $d(xy)=D(x,y)$, on the complete graph $[X]^2$. So, these two notions are essentially equivalent.

Any function $d: E_d \to\bar\IR_+$ on a connected graph $ E_d $ determines the  {\em shortest-path pseudometric}
$${\textstyle \hat d:V_d\times V_d\to\IR},\quad
\hat d(x,y)\defeq\inf\Big\{\sum_{i=1}^n d(x_{i-1}x_i):x_0=x\wedge x_n=y\wedge \{x_{i-1}x_i:0<i\le n\}\subseteq E_d \Big\}
$$
on the set $V_d=\bigcup E_d$ of vertices of the graph $E_d$. It is easy to see that $\hat d(x,y)\le d(xy)$ for every vertices $x,y\in V_d$ with $xy\in E_d $, and  $d$ is a graph pseudometric if and only if $d(xy)=\hat d(x,y)$ for every $xy\in E_d $, see \cite{DMV}.

The shortest-path pseudometric $\hat d:V_d\times V_d\to\IR$ determines the  pseudometric $$\ddot d:[V_d]^2\times [V_d]^2\to\IR,\quad \ddot d(xy,uv)\defeq\min\{\hat d(x,u)+\hat d(y,v),\hat d(x,v)+\hat d(y,u)\}$$
on the complete graph $[V_d]^2$, see Lemma~\ref{l:double}.

We say that a graph pseudometric $d$ {\em extends} a graph pseudometric $p$ if $p\subseteq d$ and $V_p=V_d$.

The values of possible extensions of a graph pseudometric $d$ are bounded from below by the function $\check d:V_d\times V_d\to\bar\IR_+$ defined by $$
\check d(x,y)\defeq\sup_{ab\in E_d}\max\{0,d(ab)-\hat d(a,x)-\hat d(y,b)\}=\sup_{ab\in E_d }\max\{0,d(ab)-\ddot d(ab,xy)\}$$
for $x,y\in X$, see \cite{P}. 
In fact, the function $\check d:V_d\times V_d\to\bar\IR_+$ is well-defined for any function $d: E_d \to\bar\IR_+$ on a connected graph $ E_d $.

\begin{definition}
A graph pseudometric $d$ is defined to be {\em floppy} if $\check d(x,y)<\hat d(x,y)$ for every vertices $x,y\in V_d$ with $xy\notin E_d$. 
\end{definition}

This definition implies that every full pseudometric is floppy. By \cite{P}, a graph pseudometric $d$ has a unique extensions to a full pseudometric if and only if $\hat d=\check d$.

\begin{example} On the full Cantor tree $2^{\le\w}=\bigcup_{\alpha\le\w}\{0,1\}^\alpha$ consider the partial metric $$d\defeq\{\langle st,2^{-|s|}-2^{-|t|}\rangle:s,t\in 2^{\le \w},\;s\subset t\},$$defined on the set $ E_d =\{st:s,t\in 2^{\le\w}:s\subset t\}$. Then $$\hat d(s,t)=2^{-|s|}+2^{-|t|}-2^{-|s\cap t|+1}\quad\mbox{and}\quad \check d(s,t)=|2^{-|s|}-2^{-|t|}|$$for any $s,t\in 2^{\le\w}$, which implies that the graph metric $d$ on $2^{\le\w}$ is floppy. Moreover, the metric space $([2^{\le\w}]^2,\ddot d)$ is separable and the sets $E_d$ and $[2^{\le \w}]^2\setminus E_d $ have cardinality of continuum.
\end{example}

Many examples of floppy graph (pseudo)metrics are supplied by the following example,  elaborated in Section~\ref{s:glue}.

\begin{example}\label{ex:glue} Let $p$ be a graph pseudometric and $\mathcal F$ be a family of graph pseudometrics satisfying the following conditions:
\begin{enumerate}
\item for every $g\in\F$, the intersection $V_g\cap V_p$ is not empty  and $\hat g(xy)=\hat p(xy)$ for all $xy\in [V_g\cap V_p]^2$;
\item for any distinct $f,g\in\mathcal F$, we have $V_f\cap V_g\subseteq V_p$.
\end{enumerate}
Then the set $d\defeq p\cup\bigcup\F$ is a graph pseudometric on the set $V_d=V_p\cup\bigcup_{f\in \F}V_f$. Moreover, for every $x,y\in V_d$, the shortest-path distance $\hat d(x,y)$ between $x$ and $y$ can be calculated by the formula
$$\hat d(x,y)=\begin{cases}\hat f(x,y)&\mbox{if $x,y\in V_f$ for some $f\in\{p\}\cup \F$};\\
{\displaystyle\inf_{\substack{a\in V_f\cap V_p\\ b\in V_g\cap V_p}}\hat f(x,a)+\hat p(a,b)+\hat g(b,y)}&\mbox{if $x\in V_f$ and $y\in V_g$ for distinct $f,g\in \{p\}\cup\F$}.
\end{cases}
$$
The graph pseudometric $d$ is floppy if $p$ is a full pseudometric, every graph pseudometric $f\in\F$ is floppy, and for every $x\in V_f\setminus V_p$ and $y\in V_p\setminus V_f$, the numbers
 $$\inf\{\hat f(a,x)+\hat f(x,b)-\hat f(ab):a,b\in V_f\cap V_p\}\mbox{ \ and \ }\inf\{p(a,y)+p(y,b)-p(ab):a,b\in V_p\cap V_f\}$$
are  positive. \hfill $\square$
\end{example}

Every graph metric $d$ can be extended to the floppy graph metric
$$\bar d\defeq \{\langle xy,r\rangle:xy\in[V_d]^2\;\wedge\;\check d(x,y)=r=\hat d(x,y)>0\},$$
called the {\em minimal floppy extension} of $d$. The minimal floppy extension $\bar d$ is contained in any floppy graph  metric that extends the graph metric $d$. 

The main technical result of the paper is the following theorem on one-step extensions of floppy graph metrics.

\begin{theorem}\label{t:step} For every floppy graph metric $d$, every doubleton $xy\in[V_d]^2\setminus E_d $, and every real number $r$ with $\frac13\check d(x,y)+\frac23\hat d(x,y)\le r<\hat d(x,y)$, the function $D=d\cup\{\langle xy,r\rangle\}$ is a floppy graph metric.
 \end{theorem}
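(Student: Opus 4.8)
The plan is to first pin down how the single new edge changes shortest-path distances, then verify the graph-metric axioms, and finally establish floppiness, which is where the constant $\tfrac13\check d(x,y)+\tfrac23\hat d(x,y)$ is consumed. Since $E_D=E_d\cup\{xy\}$, a shortest route in $E_D$ traverses the new edge at most once, so splitting paths according to whether they use $xy$ should give
\[
\hat D(s,t)=\min\{\hat d(s,t),\ r+\ddot d(xy,st)\}\qquad(s,t\in V_d).
\]
Two consequences I record: $\hat D\le\hat d$, and the uniform saving bound $\hat d(s,t)-\hat D(s,t)\le\hat d(x,y)-r$, a triangle estimate on the detour through $x,y$. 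In particular $\hat D(x,y)=\min\{\hat d(x,y),r\}=r$ (as $\ddot d(xy,xy)=0$ and $r<\hat d(x,y)$), and the unique $D$-geodesic joining $x,y$ is the edge $xy$ itself. That $D$ is a graph metric is then quick: positivity follows from $r\ge\tfrac23\hat d(x,y)>0$; the identity $D(e)=\hat D(e)$ holds on $e=xy$ by the above, and on an old edge $ab\in E_d$ it amounts to $\hat d(a,b)\le r+\ddot d(xy,ab)$, i.e. the shortcut must not shorten $ab$. This is guaranteed because $d(ab)-\ddot d(xy,ab)\le\check d(x,y)<\hat d(x,y)$ by the definition of $\check d$ and floppiness of $d$, while the hypothesis forces $r>\check d(x,y)$.

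For floppiness I fix a non-edge $uv\in[V_d]^2\setminus E_D$ and must show $\check D(u,v)<\hat D(u,v)$. I split $\check D(u,v)=\max\{\check D_1,\check D_2\}$ into the supremum $\check D_1$ over $ab\in E_d$ and the single new-edge term $\check D_2=(r-\ddot D(xy,uv))^+$. For every edge $e\in E_D$ we have $D(e)=\hat D(e)$, so the $\hat D$-triangle inequality gives the clean bound $D(e)-\ddot D(e,uv)\le\hat D(u,v)$; applied to $e=xy$ this already yields $\check D_2<\hat D(u,v)$ strictly, since equality would place $u,v$ on the $x$–$y$ geodesic, which consists of the edge $xy$ alone. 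It therefore remains to control $\check D_1$.

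The engine for $\check D_1$, and the source of the coefficients, is the combination of the clean bound with a crude bound read off from Step 1: for each $ab\in E_d$,
\[
d(ab)-\ddot D(ab,uv)\ \le\ \bigl(d(ab)-\ddot d(ab,uv)\bigr)+2\bigl(\hat d(x,y)-r\bigr)\ \le\ \check d(u,v)+2\bigl(\hat d(x,y)-r\bigr),
\]
where the factor $2$ reflects that the two legs defining $\ddot D(ab,uv)$ may each route through $xy$. Feeding in $\check d(u,v)\le\check d(x,y)+\ddot d(xy,uv)$ (the triangle inequality for the pseudometric $\ddot d$ of Lemma~\ref{l:double}) together with $r\ge\tfrac13\check d(x,y)+\tfrac23\hat d(x,y)$, the right-hand side collapses exactly to $r+\ddot d(xy,uv)\ge\hat D(u,v)$; this is precisely the computation that pins down the weights $\tfrac13,\tfrac23$.

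The main obstacle is the \emph{strict} inequality: the two estimates together only give $\check D(u,v)\le\hat D(u,v)$, and $\check D_1$ is a supremum that need not be attained. I expect to close the gap by analysing the near-extremal edges. For $d(ab)-\ddot D(ab,uv)$ to approach $r+\ddot d(xy,uv)$ the crude bound must be nearly tight, which forces both legs to realise the full saving $\hat d(x,y)-r$, hence forces $x$ and $y$ onto both geodesics; a short check of the two possible orientations then shows $d(ab)-\ddot D(ab,uv)$ is in fact non-positive in that regime, contradicting $\hat D(u,v)>0$. Reconciling this with the complementary case $\hat D(u,v)=\hat d(u,v)$, where the strict floppiness of $d$ must supply the margin directly, is the delicate bookkeeping I anticipate spending the most effort on.
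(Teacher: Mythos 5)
Your skeleton matches the paper's: the formula $\hat D(s,t)=\min\{\hat d(s,t),\,r+\ddot d(xy,st)\}$, the verification that $D$ is a graph metric via $r>\check d(x,y)\ge d(ab)-\ddot d(ab,xy)$, the splitting of $\check D(u,v)$ into the old-edge supremum $\check D_1$ and the new-edge term, and the arithmetic by which $\check d(x,y)+\ddot d(xy,uv)+2(\hat d(x,y)-r)\le r+\ddot d(xy,uv)$ consumes the hypothesis $r\ge\frac13\check d(x,y)+\frac23\hat d(x,y)$ are all present in Proposition~\ref{p:step} and Lemmas~\ref{cl:1}--\ref{cl:3}. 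But the floppiness argument has a genuine gap exactly where you flag ``delicate bookkeeping''. In the case $\hat D(u,v)=\hat d(u,v)<r+\ddot d(xy,uv)$ your two inequalities $\check D_1\le r+\ddot d(xy,uv)$ and $\hat D(u,v)\le r+\ddot d(xy,uv)$ point the same way and yield nothing, and your proposed repair --- ``the strict floppiness of $d$ must supply the margin directly'' --- cannot work. Even after you improve the factor $2$ to $1$ (correctly: if both legs of $\ddot D(ab,uv)$ route through $xy$ the term is at most $\hat d(x,y)-2r<0$), you only get $\check D_1\le\check d(u,v)+(\hat d(x,y)-r)$, and the increment $\hat d(x,y)-r$ is a fixed positive quantity, as large as $\frac13(\hat d(x,y)-\check d(x,y))$, which can vastly exceed the floppiness margin $\hat d(u,v)-\check d(u,v)$ of the pair $uv$ --- that margin is merely positive and can be arbitrarily small. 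So no bound on the increment $\check D_1-\check d(u,v)$ can close this case.

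What the paper does instead (Lemma~\ref{cl:6}, i.e.\ statement (4) of Proposition~\ref{p:step}) is prove $\hat d(u,v)-\check D(u,v)\ge r-\check d(x,y)$, transferring the floppiness margin at $xy$ (not at $uv$) to the pair $uv$. The key step: if a near-extremal edge $ab$ has, say, its $a$--$u$ leg shortened, so $\hat D(a,u)=r+\ddot d(au,xy)$, one reroutes the defining path of $\check d(x,y)$ through $u$ and then $v$ to obtain $\check d(x,y)\ge d(ab)-\ddot d(au,xy)-\hat d(u,v)-\hat d(v,b)$, whence $d(ab)-\hat D(a,u)-\hat D(v,b)\le\check d(x,y)+\hat d(u,v)-r<\hat d(u,v)$. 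This rerouting estimate is the idea your proposal is missing; without it the case $\hat D(u,v)=\hat d(u,v)$ does not close. A secondary defect: your strictness argument for the new-edge term (``equality would place $u,v$ on the $x$--$y$ geodesic'') is not valid here --- equality in $\hat D(x,y)\le\hat D(x,u)+\hat D(u,v)+\hat D(v,y)$ carries no betweenness information in a graph (pseudo)metric, and ruling it out again needs quantitative input: the paper obtains a gap of at least $\min\{\hat d(x,y)-r,\,2\ddot d(xy,uv)\}$, the second term being positive because floppiness of $d$ forces $\hat d$ to be a genuine metric, so $\ddot d(xy,uv)>0$ for $uv\ne xy$.
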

 
Theorem~\ref{t:step} can be used for constructing extensions of floppy graph metrics to full metrics obeying some restrictions on distances between vertices.  It will be convenient to formulate the extension result in the terms of the following game.

Given a graph metric $d:E_d\to\bar\IR_+$, an ordinal $\lambda$, and a family $\F$ of nonempty subsets of the real line,  we define a {\em metric-extending game} $\Game_d(\lambda,\F)$ of length $\lambda$ played by two players, I and II, as follows. The game $\Game_d(\lambda,\F)$ is started by the player I who selects a doubleton $x_0y_0\in [V_d]^2$ and a set $F_0\in\F$. The player II answers selecting a real number $r_0\in F_0$. At the $\alpha$th inning for $\alpha\in\lambda$, the player I selects a doubleton $x_\alpha y_\alpha\in [V_d]^2$ and a set $F_\alpha\in\F$, and the player II answers selecting a real number $r_\alpha\in F_\alpha$. At the end of the game, the player I is declared the winner if the relation $$D\defeq d\cup\{\langle x_\alpha y_\alpha, r_\alpha\rangle:\alpha\in\lambda\}$$ is a full metric. In the other case the player II wins the game.

The metric-extending game $\Game_d(\lambda,\F)$ is nontrivial if the family $\F$ consists of nondegenerated sets, so the second player does have some a choice at his/her moves. A set is {\em nondegenerated} if it contains more than one element.

Theorem~\ref{t:step} implies the following theorem on game extensions of floppy graph pseudometrics.

\begin{theorem}\label{t:main} Let $d: E_d \to\bar\IR_+$ be a floppy graph metric and $\Tau$ be the family of nonempty open sets in $\IR_+$. If the cardinal $\lambda=|[V_d]^2\setminus E_d |$ is at most countable, then the player I has a winning strategy in the metric-extending game $\Game_d(\lambda,\Tau)$.
\end{theorem}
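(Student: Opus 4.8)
The plan is to let player~I win by a greedy bookkeeping strategy that at every inning offers player~II an open interval so narrow that Theorem~\ref{t:step} applies no matter which point of it player~II selects. First I would fix a bijective enumeration $[V_d]^2\setminus E_d=\{e_\alpha:\alpha\in\lambda\}$, which is possible since $\lambda=|[V_d]^2\setminus E_d|$. Player~I's strategy is: at the $\alpha$th inning, having produced the current function $d_\alpha\defeq d\cup\{\langle e_\beta,r_\beta\rangle:\beta<\alpha\}$, write $e_\alpha=x_\alpha y_\alpha$ and play this doubleton together with the open set
$$F_\alpha\defeq\Big(\tfrac13\check d_\alpha(x_\alpha,y_\alpha)+\tfrac23\hat d_\alpha(x_\alpha,y_\alpha),\ \hat d_\alpha(x_\alpha,y_\alpha)\Big).$$

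I would carry along the induction hypothesis that each $d_\alpha$ is a floppy graph metric with $V_{d_\alpha}=V_d$ and $e_\alpha\notin E_{d_\alpha}$. The base case $d_0=d$ is given, and since $\lambda\le\w$ there is no interior limit stage, so only successor steps occur. Granting the hypothesis, floppiness gives $\check d_\alpha(x_\alpha,y_\alpha)<\hat d_\alpha(x_\alpha,y_\alpha)$, so $F_\alpha$ is a nonempty open interval; its left endpoint is $\ge\tfrac23\hat d_\alpha(x_\alpha,y_\alpha)>0$ (the shortest-path distance between distinct vertices of a positively weighted connected graph is positive), hence $F_\alpha\in\Tau$ and the move is legal. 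Whatever $r_\alpha\in F_\alpha$ player~II answers satisfies $\tfrac13\check d_\alpha(x_\alpha,y_\alpha)+\tfrac23\hat d_\alpha(x_\alpha,y_\alpha)\le r_\alpha<\hat d_\alpha(x_\alpha,y_\alpha)$, so Theorem~\ref{t:step} certifies that $d_{\alpha+1}=d_\alpha\cup\{\langle e_\alpha,r_\alpha\rangle\}$ is again a floppy graph metric, closing the induction.

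At the end of the game $D=d\cup\{\langle e_\alpha,r_\alpha\rangle:\alpha\in\lambda\}$ has $E_D=E_d\cup\{e_\alpha:\alpha\in\lambda\}=[V_d]^2$ and all its values are positive, so it only remains to verify that $D$ is a graph metric. As $E_D=[V_d]^2$ is the complete graph, the polygonal inequality is equivalent to the triangle inequality $D(xz)\le D(xy)+D(yz)$. Here lies the one genuine step, the passage to the limit: the three edges $xy,xz,yz$ are each present at some inning, so (using $\lambda\le\w$) there is a finite stage $\gamma$ with $\{xy,xz,yz\}\subseteq E_{d_\gamma}$; since an edge value is never altered after it is set, $d_\gamma(e)=D(e)$ for these three edges, and $d_\gamma$ being a graph metric gives $d_\gamma(xz)\le d_\gamma(xy)+d_\gamma(yz)$. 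Hence the triangle inequality holds for $D$, the polygonal inequality follows on the complete graph, and $D$ is a full metric, so player~I wins.

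The main obstacle is exactly this limit step. Its delicacy is that inserting a new \emph{short} edge (recall $r_\alpha<\hat d_\alpha(x_\alpha,y_\alpha)$) can decrease shortest-path distances, so a priori an edge already placed could later be undercut by a cheaper path and thereby destroy the graph-metric property. What saves the argument is that Theorem~\ref{t:step} guarantees each $d_{\alpha+1}$ to be a \emph{genuine} graph metric, i.e.\ $d_{\alpha+1}(e)=\hat d_{\alpha+1}(e)$ for every present edge $e$; this is precisely what makes the finite-stage triangle inequalities valid and stable, so that they survive into $D$. All the truly metric content—that the value $r_\alpha$ may range over a whole open interval while floppiness is preserved—has already been isolated in Theorem~\ref{t:step}, and the present argument reduces to enumerating the missing edges and this single limiting observation.
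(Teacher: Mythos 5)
Your proposal is correct and follows essentially the same route as the paper: enumerate the missing edges, have player~I offer at each inning the open interval $\bigl(\tfrac13\check d_\alpha(x_\alpha,y_\alpha)+\tfrac23\hat d_\alpha(x_\alpha,y_\alpha),\,\hat d_\alpha(x_\alpha,y_\alpha)\bigr)$ (nonempty and contained in $\IR_+$ by floppiness, which also forces $\hat d_\alpha(x_\alpha,y_\alpha)>0$), and invoke Theorem~\ref{t:step} inductively, using $\lambda\le\w$ to avoid limit stages. You even spell out the final verification that the union $D$ is a full metric via finite stages, a point the paper dispatches with a single ``observe that.''
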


The following theorem shows that in general, a winning strategy of the first player in the metric-extending game does depend on the moves of the second player.

\begin{theorem}\label{t:main2} Let $d: E_d \to\bar\IR_+$ be a floppy graph metric. If the subspace $[V_d]^2\setminus E_d $ of the metric space $([V_d]^2,\ddot d)$ is not discrete, then for every family $(F_{xy})_{xy\in[V_d]^2\setminus E_d }$ of non-degenerated subsets of the real line, there exists an function $r\in \prod_{xy\in [V_d]^2\setminus E_d }F_{xy}$ such that the set $d\cup r$ is not a metric.
\end{theorem}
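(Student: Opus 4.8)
The plan is to exploit the fact that, in any full metric extending $d$, the assignment $e\mapsto D(e)$ must be $1$-Lipschitz with respect to the pseudometric $\ddot d$; non-discreteness of $[V_d]^2\setminus E_d$ then allows us to force a jump between two arbitrarily $\ddot d$-close missing edges that exceeds this Lipschitz bound, destroying the triangle inequality.

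First I would establish the auxiliary inequality: if $D:[V_d]^2\to\IR$ is any full pseudometric extending $d$, then $|D(e_0)-D(e)|\le\ddot d(e_0,e)$ for all $e_0,e\in[V_d]^2$. The two ingredients are (i) $D(a,b)\le\hat d(a,b)$ for all $a,b\in V_d$, which holds because every $E_d$-path from $a$ to $b$ has the same length under $D$ as under $d$ (since $D\supseteq d$), so the triangle inequality for $D$ bounds $D(a,b)$ by the length of any such path, and taking the infimum gives $\hat d(a,b)$; and (ii) for $e_0=x_0y_0$ and $e=uv$, applying $D(x_0,y_0)\le D(x_0,u)+D(u,v)+D(v,y_0)$ together with its mate in which $u,v$ are swapped, and then bounding the two ``cross'' terms by $\hat d$ via (i) and taking the minimum, which reproduces exactly $\ddot d(e_0,e)$ by its definition. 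Symmetry in $e_0,e$ then yields the absolute value.

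Next, since the subspace $[V_d]^2\setminus E_d$ of $([V_d]^2,\ddot d)$ is not discrete, I would fix a non-isolated point $e_0=x_0y_0\in[V_d]^2\setminus E_d$. As $F_{e_0}$ is non-degenerated, choose distinct $p,q\in F_{e_0}$ and set $\gamma\defeq|p-q|>0$. Non-isolatedness of $e_0$ supplies an edge $e\in[V_d]^2\setminus E_d$ with $e\ne e_0$ and $\ddot d(e_0,e)<\tfrac\gamma2$. Picking any $s\in F_e$ and using $|p-s|+|q-s|\ge|p-q|=\gamma$, at least one of $p,q$ — call it $t$ — satisfies $|t-s|\ge\tfrac\gamma2>\ddot d(e_0,e)$. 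I would then define $r$ by $r(e_0)\defeq t$, $r(e)\defeq s$, and $r(e')$ equal to an arbitrary element of $F_{e'}$ for every remaining $e'\in[V_d]^2\setminus E_d$; this is a legitimate member of $\prod_{xy}F_{xy}$. If $D\defeq d\cup r$ were a metric it would be a full metric extending $d$ with $D(e_0)=t$ and $D(e)=s$, so the Lipschitz bound would give $|t-s|=|D(e_0)-D(e)|\le\ddot d(e_0,e)$, contradicting $|t-s|>\ddot d(e_0,e)$. Hence $d\cup r$ is not a metric.

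I expect the only substantive step to be the first one, the Lipschitz estimate, since everything afterwards is an elementary selection argument; within it the one delicate point is the correct pairing of endpoints so that the two triangle-inequality bounds combine precisely into $\ddot d$ rather than a larger quantity. It is worth noting that floppiness of $d$ plays no role in this direction of the dichotomy.
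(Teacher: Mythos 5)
Your proof is correct and takes essentially the same route as the paper: the paper likewise fixes a non-isolated doubleton of $[V_d]^2\setminus E_d$, invokes the $1$-Lipschitz property of any full metric extension with respect to $\ddot d$ (via Lemma~\ref{l:double}), and forces a violation by a pigeonhole choice inside the non-degenerated sets. The only cosmetic difference is that the paper measures the gap by $\diam(F_p)$ and fixes the value on the nearby edge first, whereas you fix two points of $F_{e_0}$ and then pick whichever lies far from the chosen value on the nearby edge.
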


Our final theorem shows that the countability of $[V_d]^2\setminus  E_d $ is essential in Theorem~\ref{t:main}.

\begin{theorem}\label{t:main3} Let $d: E_d \to\bar\IR_+$ be a graph metric  such that the metric space $([V_d]^2,\ddot d)$ contains a separable uncountable subspace $S\subseteq[V_d]^2\setminus E_d $. Then for every ordinal $\lambda$ and every family $\F$ of non-degenerated subsets of the real line, the player II has a winning strategy in the metric-extending game $\Game_d(\lambda,\F)$.
\end{theorem}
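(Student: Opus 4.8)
The plan is to reduce the statement to a single structural fact about full metrics: if $D$ is a full metric extending $d$, then the distance function $e\mapsto D(e)$ is $1$-Lipschitz on the pseudometric space $([V_d]^2,\ddot d)$. First I would record that $D(x,u)\le\hat d(x,u)$ for all $x,u\in V_d$: along any $E_d$-path from $x$ to $u$ the edges keep their $d$-values in $D$, so the triangle inequality for $D$ gives $D(x,u)\le\sum d(\cdot)$, and taking the infimum yields the bound. Applying the triangle inequality of $D$ to the chain $x,u,v,y$ (and to the opposite pairing $x,v,u,y$) then gives $|D(xy)-D(uv)|\le\min\{\hat d(x,u)+\hat d(y,v),\ \hat d(x,v)+\hat d(y,u)\}=\ddot d(xy,uv)$. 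Consequently, if the values the two players produce on $S$ fail to form a $1$-Lipschitz function, then $D$ cannot be a full metric and Player~II has won. So it suffices to design a strategy forcing a single pair $e,e'\in S$ with $|r_e-r_{e'}|>\ddot d(e,e')$.

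Next I would describe Player~II's strategy. Using the non-degeneracy of the members of $\F$, fix once and for all, for every $F\in\F$, two distinct points $a_F<b_F$ of $F$. When Player~I queries a doubleton $e=xy$ with set $F$, write $a_e=a_F$, $b_e=b_F$ and $\rho_e=b_e-a_e>0$. If $e\notin S$, Player~II answers arbitrarily (say $a_e$). If $e\in S$, Player~II looks for a previously queried doubleton $e'\in S$ with $\ddot d(e,e')<\rho_e/2$; if such an $e'$ exists she answers with whichever of $a_e,b_e$ is farther from the already chosen value $r_{e'}$, and otherwise she answers $a_e$. Since $a_e$ and $b_e$ are $\rho_e$ apart, the farther of them from any real number is at distance at least $\rho_e/2$; hence in the first case $|r_e-r_{e'}|\ge\rho_e/2>\ddot d(e,e')$, a violation of $1$-Lipschitzness that is never undone afterwards.

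It remains to show this strategy wins. If Player~I never queries some doubleton of $[V_d]^2$, then $E_D\ne[V_d]^2$ and $D$ is not a full metric. So assume every doubleton, in particular every $e\in S$, is queried, each with a well-defined gap $\rho_e>0$. Partitioning $S=\bigcup_{k\in\IN}\{e\in S:\rho_e>1/k\}$ and using that $S$ is uncountable, I obtain an uncountable set $T\subseteq S$ and a $k$ with $\rho_e>1/k$ for all $e\in T$. Because $S$ is separable, any subset of $S$ whose pairwise $\ddot d$-distances are all at least $1/(2k)$ is countable (each $\ddot d$-ball of radius $1/(4k)$ about a point of a fixed countable dense set contains at most one such point); as $T$ is uncountable, there are distinct $e,e'\in T$ with $\ddot d(e,e')<1/(2k)$. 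For whichever of them is queried second—say $e$—we have $\rho_e/2>1/(2k)>\ddot d(e,e')$ and $e'$ is an already queried member of $S$, so the trigger condition of the strategy held at $e$. Thus a violating pair was produced at some inning, $e\mapsto r_e$ is not $1$-Lipschitz on $S$, and $D$ is not a full metric: Player~II wins.

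The genuine obstacle is the \emph{online} character of the game: Player~II must commit to each value with no knowledge of future moves, so she cannot aim at a prescribed pair chosen in advance. The device that overcomes this is the purely local trigger—react at the \emph{first} moment two already-revealed $S$-points fall within half a gap of each other—combined with the pigeonhole passage to a uniform gap $1/k$ on an uncountable $T$. Separability enters only at this last step, to guarantee that an uncountable subset of $S$ cannot be uniformly $\ddot d$-separated and hence must contain an arbitrarily close pair; this is exactly where the hypothesis that $S$ is separable and uncountable is used.
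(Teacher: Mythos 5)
Your argument is correct, and it shares the paper's overall architecture: Player~II waits for the first moment at which she can violate the necessary $1$-Lipschitz condition $|D(e)-D(e')|\le\ddot d(e,e')$ (which both you and the paper derive from Lemma~\ref{l:double} together with the observation $\hat D\le\hat d$), and the separability plus uncountability of $S$ guarantees that this moment arrives. Where you genuinely diverge is in the implementation of that last step, and your version is more elementary. The paper's trigger at inning $\beta$ asks whether the set $F_\beta$ currently offered contains a point far from some \emph{earlier} answer; to prove this eventually fires, it records a two-element subset $\pi(\alpha)\subseteq F_\alpha$ at every inning, passes to the product $S\times[\IR]^2$ with the metric $\ddot d+\ddot e$, and uses the Lindel\"of property to extract a condensation point $\eta(\alpha)$ followed by a later point within $\tfrac12\diam\pi(\alpha)$ of it. You instead pre-commit to a gap $\rho_F=b_F-a_F$ inside each $F\in\F$, anchor the trigger at the \emph{later} inning (if $\ddot d(e,e')<\rho_e/2$ then one of $a_e,b_e$ is automatically at distance $\ge\rho_e/2$ from any previously chosen real), and replace the product-space argument by a countable pigeonhole on the gaps plus the fact that a separable metric space contains no uncountable uniformly separated subset. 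This eliminates the $[\IR]^2$ factor and makes the role of separability completely transparent. Two bookkeeping points are worth making explicit: (i) Player~I may query the same doubleton at several innings, possibly with different sets $F\in\F$, so $\rho_e$ should be fixed as the gap at the first inning where $e$ is queried (if repeated queries ever receive different answers, $D$ fails to be a function and Player~II wins outright); (ii) $\ddot d$ is only a pseudometric on $[V_d]^2$, but this is harmless, since a violation $|r_e-r_{e'}|>\ddot d(e,e')$ contradicts the Lipschitz condition even when $\ddot d(e,e')=0$.
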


Theorems~\ref{t:step}, \ref{t:main}, \ref{t:main2}, \ref{t:main3} will be proved in Sections~\ref{s:lemma}, \ref{s:main}, \ref{s:main2}, \ref{s:main3}, respectively. Theorem~\ref{t:main} implies the following corollary that answers the question \cite{MO} asked by the first author at {\tt MathOverflow}.

\begin{corollary}  Let $p: E_p \to\bar\IR_+$ be a floppy graph metric such that $|[V_p]^2\setminus E_p |\le\w$. For any indexed family $(F_{e})_{e\in[V_p]^2\setminus E_p }$ of dense sets in $\bar \IR_+$, there exists an injective function $r\in\prod_{e\in[V_p]^2\setminus E_p}F_e$ such that $p\cup r$ is a full metric. 
\end{corollary}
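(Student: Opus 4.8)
The plan is to deduce the corollary from Theorem~\ref{t:main} by playing the metric-extending game $\Game_p(\lambda,\Tau)$ for $\lambda\defeq|[V_p]^2\setminus E_p|$, letting the first player follow a winning strategy while I, in the role of the second player, answer with reals that are simultaneously forced into the prescribed dense sets and kept pairwise distinct. Since $\lambda\le\w$ and $\Tau$ is the family of nonempty open subsets of $\IR_+$, Theorem~\ref{t:main} supplies a winning strategy $\sigma$ of the first player in $\Game_p(\lambda,\Tau)$.

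The decisive preliminary observation is that, in every play consistent with a winning strategy $\sigma$, the first player selects exactly the doubletons of $[V_p]^2\setminus E_p$, each of them exactly once. Indeed, every nonempty open subset of $\IR_+$ is infinite, so at each inning the second player has at least two admissible answers. Hence if $\sigma$ ever prescribed a doubleton $e\in E_p$, the second player could answer with some real $\ne p(e)$, and if $\sigma$ ever repeated a doubleton, the second player could answer the two innings by distinct reals; in both cases the resulting relation $D$ would fail to be a function, contradicting that $\sigma$ is winning. Since, moreover, the outcome of any $\sigma$-play is a full metric on $V_p$, every doubleton of $[V_p]^2\setminus E_p$ must eventually be played. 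Thus the moves of the first player enumerate $[V_p]^2\setminus E_p$ without repetitions.

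With this in hand I would build a single $\sigma$-play by recursion on the inning $\alpha<\lambda$. Assume the answers $(r_\beta)_{\beta<\alpha}$ have been fixed, and let $x_\alpha y_\alpha\in[V_p]^2\setminus E_p$ and $F_\alpha\in\Tau$ be the move that $\sigma$ dictates against this history. Because $F_\alpha$ is a nonempty open subset of $\IR_+$ and $F_{x_\alpha y_\alpha}$ is dense in $\bar\IR_+$, the intersection $F_\alpha\cap F_{x_\alpha y_\alpha}$ is dense in $F_\alpha$, hence infinite; as $\alpha$ is finite, the finite set $\{r_\beta:\beta<\alpha\}$ leaves a point of it uncovered, and I let the second player answer with any $$r_\alpha\in\big(F_\alpha\cap F_{x_\alpha y_\alpha}\big)\setminus\{r_\beta:\beta<\alpha\}.$$ This choice keeps the answers pairwise distinct and places $r_\alpha$ in $F_{x_\alpha y_\alpha}$.

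It then remains to read off the conclusion. As $\sigma$ is winning and the constructed play is consistent with $\sigma$, the relation $D\defeq p\cup\{\langle x_\alpha y_\alpha,r_\alpha\rangle:\alpha<\lambda\}$ is a full metric. By the preliminary observation the set $r\defeq\{\langle x_\alpha y_\alpha,r_\alpha\rangle:\alpha<\lambda\}$ is a function defined on the whole of $[V_p]^2\setminus E_p$ with $r(x_\alpha y_\alpha)=r_\alpha\in F_{x_\alpha y_\alpha}$, so $r\in\prod_{e\in[V_p]^2\setminus E_p}F_e$; it is injective because the reals $r_\alpha$ were chosen pairwise distinct; and $p\cup r=D$ is the required full metric. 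The only genuinely delicate point is the preliminary observation that a winning strategy plays each missing doubleton exactly once and no edge of $E_p$; granting that, the rest is a routine density-and-avoidance argument, which works precisely because $[V_p]^2\setminus E_p$ is at most countable, so only finitely many earlier answers must be dodged at each inning.
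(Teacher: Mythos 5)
Your proposal is correct and follows essentially the same route as the paper: invoke Theorem~\ref{t:main} to get a winning strategy for player I, then have player II answer each inning with a point of $F_\alpha\cap F_{x_\alpha y_\alpha}$ avoiding the finitely many earlier answers. The only difference is that you explicitly justify the ``preliminary observation'' that any winning strategy must enumerate $[V_p]^2\setminus E_p$ injectively (via a deviation argument), whereas the paper tacitly reads this off from the particular strategy constructed in its proof of Theorem~\ref{t:main}; your version is slightly more self-contained but not a different argument.
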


\begin{proof} Since the cardinal $\lambda=|[V_p]^2\setminus E_p|$ is at most countable, by Theorem~\ref{t:main}, the player I has a winning strategy  in the metric-extending game $\Game_p(\lambda,\Tau)$. According to this strategy, at the $n$-th inning the player $I$ selects a doubleton $x_ny_n\in [V_p]^2\setminus E_p$ and a nonempty open set $U_n\subseteq\IR_+$. Then the player II selects a real number $r_n\in U\cap F_{x_ny_n}\setminus\{r_k:k<n\}$. Since the strategy of the player I is winning, the set $d\defeq p\cup\{\langle x_ny_n,r_n\rangle:n\in\lambda\}$ is a full metric. It follows that the set $r=\{\langle x_ny_n,r_n\rangle:n\in\lambda,\;x_ny_n\notin E_p\}=d\setminus p$ is a required injective function with $r\in \prod_{e\in [V_p]^2\setminus E_p}$ and $d=p\cup r$.
\end{proof}




 \section{The distance between doubletons}\label{s:double}
 
In this section we show that for any function $d: E_d \to\bar\IR_+$ on a connected group $ E_d \subseteq[X]^2$, the function
$$\ddot d:[X]^2\times [X]^2\to\bar\IR_+,\quad\ddot d(xy,uv)\defeq\min\{\hat d(x,u)+\hat d(v,y),\hat d(x,v)+\hat d(u,y)\},$$is a pseudometric on the complete graph $[E_d]^2$. In the definition of $\ddot d$, $\hat d:X\times X\to\bar\IR_+$ is the pseudometric on $X$, defined by 
$$\hat d(x,y)\defeq\inf\Big\{\sum_{i=1}^nd(x_{i-1}x_i):x_0x_n=xy\mbox{ and }\{x_{i-1}x_i:0<i\le n\}\subseteq E_d \Big\}.$$
 
\begin{lemma}\label{l:double} For any  points $a,b,u,v,x,y\in X$ we have
$$\ddot d(ab,xy)\le \ddot d(ab,uv)+\ddot d(uv,xy)\quad\mbox{and}\quad \hat d(a,b)\le \hat d(u,v)+\ddot d(ab,uv).$$
\end{lemma}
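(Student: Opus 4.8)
The plan is to reduce both inequalities to the two defining properties of the shortest-path pseudometric $\hat d$ on $X$: its symmetry $\hat d(p,q)=\hat d(q,p)$ and its triangle inequality $\hat d(p,q)\le\hat d(p,r)+\hat d(r,q)$. Both are immediate from the definition of $\hat d$ as an infimum of path-lengths, since reversing a path reverses its edge list without changing its length, and concatenating a path from $p$ to $r$ with a path from $r$ to $q$ yields a path from $p$ to $q$ whose length is the sum of the two lengths, so the infimum defining $\hat d(p,q)$ is bounded by that sum. Throughout I shall use the reformulation of $\ddot d$ as a minimum-weight matching: for two-element sets $\{p,q\}$ and $\{p',q'\}$ there are exactly two bijections between them, and $\ddot d(pq,p'q')$ is the smaller of the two total $\hat d$-costs $\hat d(p,p')+\hat d(q,q')$ and $\hat d(p,q')+\hat d(q,p')$.

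For the triangle inequality $\ddot d(ab,xy)\le\ddot d(ab,uv)+\ddot d(uv,xy)$ I would fix a bijection $\sigma\colon\{a,b\}\to\{u,v\}$ realising $\ddot d(ab,uv)$ and a bijection $\tau\colon\{u,v\}\to\{x,y\}$ realising $\ddot d(uv,xy)$. Their composite $\tau\circ\sigma\colon\{a,b\}\to\{x,y\}$ is one of the two bijections over which $\ddot d(ab,xy)$ is minimised, so $\ddot d(ab,xy)\le\hat d(a,\tau\sigma(a))+\hat d(b,\tau\sigma(b))$. Applying the triangle inequality of $\hat d$ to each summand, $\hat d(p,\tau\sigma(p))\le\hat d(p,\sigma(p))+\hat d(\sigma(p),\tau\sigma(p))$ for $p\in\{a,b\}$, and summing gives $\ddot d(ab,xy)\le\big(\hat d(a,\sigma(a))+\hat d(b,\sigma(b))\big)+\big(\hat d(\sigma(a),\tau\sigma(a))+\hat d(\sigma(b),\tau\sigma(b))\big)=\ddot d(ab,uv)+\ddot d(uv,xy)$, as desired.

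For the second inequality $\hat d(a,b)\le\hat d(u,v)+\ddot d(ab,uv)$ I would argue directly, writing $\ddot d(ab,uv)=\min\{A,B\}$ with $A\defeq\hat d(a,u)+\hat d(v,b)$ and $B\defeq\hat d(a,v)+\hat d(u,b)$. The path $a\to u\to v\to b$ together with the triangle inequality gives $\hat d(a,b)\le\hat d(a,u)+\hat d(u,v)+\hat d(v,b)=\hat d(u,v)+A$, while the path $a\to v\to u\to b$ gives $\hat d(a,b)\le\hat d(a,v)+\hat d(v,u)+\hat d(u,b)=\hat d(u,v)+B$, using the symmetry $\hat d(v,u)=\hat d(u,v)$. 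Since both right-hand sides dominate $\hat d(a,b)$, the bound also holds with the smaller of $A$ and $B$, that is $\hat d(a,b)\le\hat d(u,v)+\min\{A,B\}=\hat d(u,v)+\ddot d(ab,uv)$.

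I expect no essential obstacle: both parts are elementary consequences of the triangle inequality and symmetry of $\hat d$. The only points needing care are the finite bookkeeping of the two matchings in the composition argument, where one should verify that $\tau\circ\sigma$ is genuinely admissible as one of the two competing bijections for $\ddot d(ab,xy)$, and the consistent use of the symmetry of $\hat d$ when rewriting terms such as $\hat d(v,b)$ as $\hat d(b,v)$ or $\hat d(v,u)$ as $\hat d(u,v)$.
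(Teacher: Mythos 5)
Your proof is correct and follows essentially the same route as the paper: the paper also realises $\ddot d(ab,uv)$ and $\ddot d(uv,xy)$ by explicit optimal pairings (written as relabelled representatives $\tilde u,\tilde v,\tilde x,\tilde y$ rather than bijections $\sigma,\tau$), composes them to get an admissible competitor for $\ddot d(ab,xy)$, and bounds $\hat d(a,b)$ via the chain $a\to\tilde u\to\tilde v\to b$. The only cosmetic difference is that for the second inequality you check both matchings and take the minimum, whereas the paper works only with the realising one; the content is identical.
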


\begin{proof} By the definition of $\ddot d(ab,uv)$ and $\ddot d(uv,xy)$, there exist  points $\tilde u,\tilde v\in uv$ and $\tilde x,\tilde y\in xy$ such that $\tilde u\tilde v=uv$, $\tilde x\tilde y=xy$, $\ddot d(ab,uv)=\hat d(a,\tilde u)+\hat d(b,\tilde v)$ and $\ddot d(uv,xy)=\hat d(\tilde u,\tilde x)+\hat d(\tilde v,\tilde y)$. Then 
$$\ddot d(ab,uv)\le \hat d(a,\tilde x)+\hat d(b,\tilde y)
\le \hat d(a,\tilde u)+\hat d(\tilde u,\tilde x)+\hat d(b,\tilde v)+\hat d(\tilde v,\tilde y)=\ddot d(ab,uv)+\hat d(uv,xy)
$$
and $$\hat d(a,b)\le \hat d(a,\tilde u)+\hat d(\tilde u,\tilde v)+\hat d(\tilde v,b)=\hat d(a,\tilde u)+\hat d(b,\tilde v)+\hat d(u,v)=\ddot d(ab,uv)+\hat d(u,v).$$
\end{proof}

\section{One step extensions of partial metrics}\label{s:lemma}

The last statement of the following proposition implies Theorem~\ref{t:step}.

\begin{proposition}\label{p:step} For every graph pseudometric $d$ on a set $X$, every doubleton $xy\in[X]^2\setminus E_d $, and every real number $r$ with $\check d(xy)\le r\le \hat d(xy)$, the function $D\defeq d\cup\{\langle xy,r\rangle\}$ is a graph pseudometric such that for every $u,v\in X$ the following conditions are satisfied:
\begin{enumerate}
\item $\hat D(u,v)\le \hat d(u,v)$ and $\max\{\check d(u,v),r-\ddot d(xy,uv)\}\le \check D(u,v)$.
\item If $\hat d(u,v)\ne\hat D(u,v)$, then $\hat d(u,v)-(\hat d(x,y)-r)\le \hat D(u,v)=r+\ddot d(xy,uv)$.
\item If $\hat d(u,v)\ne\hat D(u,v)$, then $\hat D(u,v)-\check d(u,v)\ge r-\check d(x,y)$.
\item If $\check d(u,v)\ne\check D(u,v)\ne r-\ddot d(xy,uv)$ and $\check D(u,v)>\hat d(x,y)-2r$, then\newline
$\check D(u,v)-\check d(u,v)\le\hat d(x,y)-r$ and $\hat d(u,v)-\check D(u,v)\ge r-\check d(x,y).$
\item If $\frac13\check d(x,y)+\frac23\hat d(x,y)\le r\le\hat d(x,y)$, then\newline $\hat D(u,v)-\check D(u,v)\ge\min\{\hat d(u,v)-\check d(u,v),\hat d(x,y)-r,2\ddot d(xy,uv)\}$.
\end{enumerate}
\end{proposition}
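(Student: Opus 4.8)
The plan is to derive (5) from items (1)--(4) together with Lemma~\ref{l:double} by a finite case analysis. To keep the bookkeeping light I would abbreviate $a=\hat d(u,v)$, $a'=\hat D(u,v)$, $c=\check d(u,v)$, $c'=\check D(u,v)$, $s=\ddot d(xy,uv)$, $H=\hat d(x,y)$, $C=\check d(x,y)$, and set $m=\min\{a-c,\,H-r,\,2s\}$, the quantity to be bounded from below by $a'-c'$.

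First I would extract two scalar inequalities that do all the real work. Clearing denominators in the hypothesis $\tfrac13 C+\tfrac23 H\le r\le H$ gives $r-C\ge 2(H-r)\ge H-r\ge 0$; since $C\ge 0$ this also yields $2r\ge\tfrac43 H\ge H$, so that $H-2r\le 0\le c'$. Consequently the side condition $c'>\hat d(x,y)-2r$ of item (4) is automatic as soon as $c'>0$, which removes it from consideration. The second ingredient is the geometric bound $H\le a+s$, i.e.\ $\hat d(x,y)\le\hat d(u,v)+\ddot d(xy,uv)$, which is exactly the second inequality of Lemma~\ref{l:double} applied to the pairs $xy$ and $uv$.

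Then I would branch on the only two quantities that can move when the edge $xy$ is adjoined. By (1) one always has $a'\le a$ and $c'\ge\max\{c,\,r-s\}$, and by (2) the alternative $a'\ne a$ forces $a'=r+s$; moreover $c'$ is either equal to $c$, equal to $r-s$, or different from both. This produces six cases. The two trivial ones: if $a'=a$ and $c'=c$ then $a'-c'=a-c\ge m$, and if $a'=r+s$ and $c'=r-s$ then $a'-c'=2s\ge m$. If $a'<a$ and $c'=c$, item (3) gives $a'-c'=a'-c\ge r-C\ge H-r\ge m$. If $a'=a$ and $c'=r-s>c$, the geometric bound gives $a'-c'=a-(r-s)=(a+s)-r\ge H-r\ge m$. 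Finally, when $c'>c$ and $c'\ne r-s$ item (4) applies (here $c'>0$): for $a'=a$ it yields $a'-c'=a-c'\ge r-C\ge H-r\ge m$, and for $a'=r+s$ combining (3) with the first conclusion $c'-c\le H-r$ of (4) gives $a'-c'\ge(r-C)-(H-r)=2r-C-H\ge H-r\ge m$.

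I expect the case distinctions themselves to be mechanical once the two scalar inequalities are in hand; the crux is identifying them and matching each case to the right combination of (1)--(4). The genuinely delicate step is the last case, $a'<a$ with $c'\notin\{c,\,r-s\}$: there one must spend both (3) and (4) and then absorb the loss $c'-c\le H-r$ using the factor-of-two slack $r-C\ge 2(H-r)$. This is exactly the place where the weights $\tfrac13,\tfrac23$ enter, and it pins down why $r$ cannot be allowed closer to $\check d(x,y)$.
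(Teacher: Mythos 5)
Your proposal establishes only part (5) of the proposition, and it does so by taking parts (1)--(4) and the assertion that $D$ is a graph pseudometric as given. Those are precisely the substance of the statement. The verification of the polygonal inequality for $D$ requires locating the (unique, by pairwise distinctness of the chain) occurrence of the new edge $xy$ in a putative violating chain and playing it against the hypothesis $\check d(x,y)\le r\le\hat d(x,y)$; part (2) requires showing that any strictly shortened path must pass through $xy$, which pins down $\hat D(u,v)=r+\ddot d(xy,uv)$; and part (4) is the delicate core of the whole proposition --- it needs a carefully calibrated $\e$, a witness edge $ab$ realizing $\check D(u,v)$ up to $\e$, a proof that $ab\ne xy$, and a case analysis on which of $\hat D(a,u)<\hat d(a,u)$ and $\hat D(v,b)<\hat d(v,b)$ hold, with the hypothesis $\check D(u,v)>\hat d(x,y)-2r$ used exactly to exclude the case where both hold. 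None of this is addressed, so as a proof of Proposition~\ref{p:step} the proposal has a substantial gap.

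The portion you do prove --- the deduction of (5) from (1)--(4) and Lemma~\ref{l:double} --- is correct and coincides with the paper's own argument (Lemma~\ref{l:7}) almost case for case: the two scalar facts $r-\check d(x,y)\ge 2(\hat d(x,y)-r)$ and $\hat d(x,y)-2r\le 0$, the observation that the side condition of (4) is automatic because $\check D(u,v)>\check d(u,v)\ge0$ whenever (4) is invoked, and the six-fold branching on the pair $(\hat D(u,v),\check D(u,v))$ all match. The only (harmless) deviation is in the final case, where you combine (3) with the first conclusion of (4), namely $\check D(u,v)-\check d(u,v)\le \hat d(x,y)-r$, whereas the paper combines (2) with the second conclusion of (4); both routes yield the same bound $2r-\check d(x,y)-\hat d(x,y)\ge\hat d(x,y)-r$. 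Your closing remark correctly identifies where the weights $\tfrac13,\tfrac23$ enter, but the genuinely hard analysis lives in the unproved items (1)--(4) and in the pseudometric property of $D$.
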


\begin{proof} Assuming that $D$ is not a graph pseudometric, we can find a sequence of pairwise distinct points $x_0,x_1,\dots,x_n\in X$ such that $\{x_0,x_n\}\cup\{x_{i-1}x_i:0<i\le n\}\subseteq\dom[D]$ and $\sum_{i=1}^nD(x_{i-1}y_{i-1})<D(x_0x_n)$. 

If $x_0x_n=xy$, then the strict inequality $\sum_{i=1}^nD(x_{i-1}y_{i-1})<D(x_0x_n)=D(xy)$ implies that $x_{i-1}x_i\ne xy$ and hence $D(x_{i-1}x_i)=d(x_{i-1}x_i)$ for all $i\in\{1,\dots,n\}$. Then $\hat d(x,y)\le\sum_{i=1}^nd(x_{i-1}x_i)=\sum_{i=1}^nD(x_{i-1}x_i)<D(xy)=r\le \hat d(x,y)$, which is a  contradiction showing that $x_0x_n\ne xy$ and hence $x_0x_n\in\dom[D]\setminus\{xy\}= E_d $ and $D(x_0x_n)=d(x_0x_n)$. The strict inequality $\sum_{i=1}^nD(x_{i-1},x_i)<D(x_0x_n)=d(x_0x_n)\le\sum_{i=1}^nd(x_{i-1}x_i)$ ensures that $x_{k-1}x_k=xy$ for some $k\in\{1,\dots,n\}$. It follows from $\{x_i:i\in\{0,\dots,n\}\setminus\{k-1,k\}\}\cap \{x_{k-1},x_k\}=\emptyset$ that $x_{i-1}x_i\in E_d $ for every $i\in\{1,\dots,n\}\setminus\{k\}$ and hence 
\begin{multline*}\check d(x,y)\le r=D(xy)=D(x_{k-1}x_k)<D(x_0x_n)-\sum_{i=1}^{k-1}D(x_{i-1}x_i)-\sum_{i=k+1}^nD(x_{i-1}x_i)\\=d(x_0x_n)-\sum_{i=1}^{k-1}d(x_{i-1}x_i)-\sum_{i=k+1}^nd(x_{i-1}x_i)\le d(x_0x_n)-\hat d(x_0,x_{k-1})-\hat d(x_k,x_n)\le \check d(x,y),
\end{multline*}
which is a desired contradiction showing that $D$ is a graph pseudometric.
\smallskip

It remains to prove the statements (1)--(5). The proof of the statement (1) is divided into two lemmas.

\begin{lemma}\label{cl:1} For every elements $u,v\in X$, we have $\hat D(u,v)\le \hat d(u,v)$.
\end{lemma}

\begin{proof} Assuming that $\hat d(u,v)<\hat D(u,v)$, we can find a sequence $x_0,\dots,x_n\in X$ such that $x_0x_n=uv$, $\{x_{i-1}x_i:0<i\le n\}\subseteq  E_d \subseteq \dom[D]$ and $\sum_{i=1}^nd(x_{i-1}x_i)<\hat D(u,v)=\hat D(x_0,x_n)$. Then $$\hat D(x_0,x_n)\le\sum_{i=1}^nD(x_{i-1}x_i)=\sum_{i=1}^nd(x_{i-1}x_i)<\hat D(x_0,x_n)$$is a desired contradiction showing that $\hat D(u,v)\le \hat d(u,v)$.
\end{proof}

\begin{lemma}\label{cl:2} For every elements $u,v\in X$ we have $r-\ddot d(uv,xy)\le \check D(u,v)$ and $\check d(u,v)\le \check D(u,v)$.
\end{lemma}

\begin{proof} The definition of $\check D(u,v)>0$ and Lemma~\ref{cl:1} ensure that
$$
\begin{aligned}
\check D(u,v)&\ge \max\{D(xy)-\hat D(x,u)-\hat D(v,y),D(xy)-\hat D(x,v)-\hat D(u,y)\}\\&=r-\min\{\hat D(x,u)+\hat D(v,y),\hat D(x,v)+\hat D(v,y)\}\\
&\ge r-\min\{\hat d(x,u)+\hat d(v,y),\hat d(x,v)+\hat d(v,y)\}=r-\ddot d(xy,uv).
\end{aligned}
$$

 Assuming that $\check D(u,v)<\check d(u,v)$, we can find a doubleton $ab\in E_d \subseteq\dom[D]$ such that $d(a,b)-\hat d(a,u)-\hat d(v,b)>\check D(u,v)$. Lemma~\ref{cl:1} implies that $\hat D(a,u)\le \hat d(a,u)$ and $\hat D(v,b)\le \hat d(v,b)$ and hence $$\check D(u,v)<d(ab)-\hat d(a,u)-\hat d(v,b)\le D(a,b)-\hat D(a,u)-\hat D(v,b)\le \check D(u,v),$$
which is a contradiction showing that $\check D(u,v)\ge \check d(u,v)$.
\end{proof}

The statement (2) of Proposition~\ref{p:step} is proved in the following lemma.

\begin{lemma}\label{cl:3} For every $u,v\in X$ with $\hat d(u,v)\ne\hat D(u,v)$ we have
$$\hat D(u,v)=r+\ddot d(xy,uv)\ge \hat d(u,v)-(\hat d(x,y)-r).$$
\end{lemma}

\begin{proof} The triangle inequality for the full pseudometric $\hat D$ and Lemma~\ref{cl:1} imply $$\hat D(u,v)\le \hat D(u,x)+D(x,y)+\hat D
(y,v)\le \hat d(u,x)+r+\hat d(y,v).$$ By analogy we can show that $\hat D(u,v)\le\hat d(u ,y)+r+\hat d(x,v)$. Therefore, $$\hat D(u,v)\le r+ \min\{\hat d(u,x)+\hat d(y,v),\hat d(u,y)+\hat d(x,v)\}=r+\ddot d(xy,uv).$$
By Lemma~\ref{cl:1}, the inequality $\hat D(u,v)\ne \hat d(u,v)$ implies the strict  inequality $\hat D(u,v)<\hat d(u,v)$. Assuming that  $\hat D(u,v)\ne r+\ddot d(uv,xy)$, we can choose $\e>0$ such that $$\e<\min\{\hat d(u,v)-\hat D(u,v),r+\ddot d(uv,xy)-\hat D(u,v)\}.$$ By the definition of $\hat D(u,v)$, there exists a sequence of pairwise distinct points $x_0,\dots,x_n\in X$ such that $x_0=u$, $x_n=v$, $\{x_{i-1}x_i:0<i\le n\}\subseteq\dom[D]$ and $\sum_{i=1}^nD(x_{i-1}x_i)<\hat D(u,v)+\e$. 

Assuming that $x_{k-1}x_k\ne xy$ for all $k\in\{1,\dots,n\}$, we conclude that $\{x_{i-1}x_i:0<i\le n\}\subseteq\dom[D]\setminus\{xy\}= E_d $ and hence $$\hat d(u,v)\le\sum_{i=1}^nd(x_{i-1}x_i)=\sum_{i=1}^nD(x_{i-1}x_i)<\hat D(u,v)+\e<\hat d(u,v),$$which is a contradiction showing that $x_{k-1}x=xy$ for some $k\in\{1,\dots,n\}$. 

Then $\{x_{i-1}x_i:i\in\{1,\dots,n\}\setminus\{k\}\}\subseteq\dom[D]\setminus\{xy\}= E_d $ and hence 
$$\sum_{i=1}^nD(x_{i-1}x_i)=\sum_{i=1}^{k-1}d(x_{i-1}x_i)+r+\sum_{i=k+1}^nd(x_{i-1}x_i)\ge\hat d(x_0,x_{k-1})+r+\hat d(x_k,x_n).$$
The choice of $\e$ ensures that
\begin{multline*}
\ddot d(uv,xy)+r=\ddot d(x_0x_n,xy)+r\le \hat d(x_0,x_{k-1})+\hat d(x_k,x_n)+r\\
\le \sum_{i=1}^nD(x_{i-1}x_i)<\hat D(u,v)+\e<\ddot d(uv,xy)+r,
\end{multline*}
which is a contradiction completing the proof of the equality $\hat D(u,v)=\ddot d(uv,xy)+r$. 

Lemma~\ref{l:double} implies
$$\hat d(u,v)\le \ddot d(uv,xy)+\hat d(x,y)=\ddot d(uv,xy)+r+(\hat d(x,y)-r)=\hat D(u,v)+(\hat d(x,y)-r).$$
\end{proof}

Now fix any elements $u,v\in X$. 
The statement (3) of Proposition~\ref{p:step} is proved in the following lemma.

\begin{lemma}\label{cl:5} If $\hat d(u,v)\ne\hat D(u,v)$, then $\hat D(u,v)-\check d(u,v)\ge r-\check d(x,y)$.
\end{lemma}

\begin{proof}  By Claim~\ref{cl:3}, $\hat D(u,v)=r+\ddot d(uv,xy)$. Assuming that  $\hat D(u,v)-\check d(u,v)<r-\check d(x,y)$, we conclude that $\check d(u,v)>\hat D(u,v)-r+\check d(x,y)=\ddot d(xy,uv)+\check d(x,y)\ge 0$. By the definition of the number $\check d(u,v)>\ddot d(xy,uv)+\check d(x,y)$, there exist points $a,b\in X$ such that $ab\in E_d $ and $d(ab)-\ddot d(ab,uv)>\ddot d(xy,uv)+\check d(x,y)$. Combining this inequality with the triangle inequality for the pseudometrics $\hat d$ and $\ddot d$, we obtain a desired contradiction:
$$\hat d(x,y)\ge d(ab)-\ddot d(ab,xy)\ge d(ab)-\ddot d(ab,uv)-\ddot d(uv,xy)>\ddot d(xy,uv)+\hat d(x,y)-\ddot d(uv,xy)=\hat d(x,y).$$
\end{proof}

The statement (4) of Proposition~\ref{p:step} is proved in the following lemma.



\begin{lemma}\label{cl:6} If $\check d(u,v)\ne\check D(u,v)\ne r-\ddot d(xy,uv)$ and $\check D(u,v)>\hat d(x,y)-2r$, then
$$\check D(u,v)-\check d(u,v)\le\hat d(x,y)-r\quad\mbox{and}\quad \hat d(u,v)-\check D(u,v)\ge r-\check d(x,y).$$
\end{lemma}

\begin{proof} Assume that $\check d(u,v)\ne \check D(u,v)\ne r-\ddot d(xy,uv)$ and $\check D(u,v)>\hat d(x,y)-2r$. Then $\check D(u,v)<\min\{\check d(u,v),r-\ddot d(xy,uv)\}$, by Lemma~\ref{cl:3}. Choose a positive number $$\e<\min\{\check D(u,v)-\check d(u,v),\check D(u,v)-r+\ddot d(xy,uv),\check D(u,v)-\hat d(x,y)+2r\}$$having the following two properties:
\begin{itemize}
\item[(i)] if $\check D(u,v)-\check d(u,v)>\hat d(x,y)-r$, then $\check D(u,v)-\check d(u,v)-\e>\hat d(x,y)-r$;
\item[(ii)] if $\hat d(u,v)-\check D(u,v)<r-\check d(x,y)$, then $\hat d(u,v)-\check D(u,v)+\e<r-\check d(x,y)$.
\end{itemize}

By the definition of $\check D(u,v)>\check d(u,v)\ge 0$, there exist points $a,b\in X$ such that $ab\in\dom[D]$ and $D(ab)-\hat D(a,u)-\hat D(v,b)>\check D(u,v)-\e$.

We claim that $ab\ne xy$. Indeed, assuming that $ab=xy$, we obtain that
$\hat D(a,u)+\hat D(v,b)<D(ab)-(\check D(u,v)-\e))\le r-\check d(u,v)\le r$. 
Applying Lemma~\ref{cl:3}, we conclude that 
$\hat D(a,u)+\hat D(v,b)=\hat d(a,u)+\hat d(v,b)$. If $\hat D(a,v)+\hat D(u,b)\le \hat D(a,u)+\hat D(v,b)$, then $\hat D(a,v)+\hat D(u,b)\le \hat D(a,u)+\hat D(v,b)<r$ and Lemma~\ref{cl:3} ensures that $\hat D(a,v)+\hat D(u,b)=\hat d(a,v)+\hat d(u,b)$.
In this case $\ddot D(xy,uv)=\min\{\hat D(a,u)+\hat D(v,b),\hat D(a,v)+\hat D(u,b)\}=
\min\{\hat d(a,u)+\hat d(v,b),\hat d(a,v)+\hat d(u,b)\}=\ddot d(xy,uv)$.
If $\hat D(a,v)+\hat D(u,b)>\hat D(a,u)+\hat D(v,b)$, then by Lemma~\ref{cl:1},
$$\hat d(a,u)+\hat d(v,b)=\hat D(a,u)+\hat D(v,b)<\hat D(a,v)+\hat D(u,b)\le \hat d(a,v)+\hat d(u,b)$$ and
$$
\begin{aligned}
\ddot d(xy,uv)&=\min\{\hat d(a,u)+\hat d(v,b),\hat d(a,v)+\hat d(u,b)\}=\hat d(a,u)+\hat d(v,b)=\hat D(a,u)+\hat D(v,b)\\
&=\min\{\hat D(a,u)+\hat D(v,b),\hat D(a,v)+\hat D(u,b)\}=\ddot D(ab,uv)=\ddot D(xy,uv).
\end{aligned}
$$
In both cases we obtain $\ddot d(xy,uv)=\ddot D(xy,uv)=\ddot D(ab,uv)$.
Then 
$$r-\ddot d(xy,uv)=r-\ddot D(ab,uv)\ge D(ab)-\hat D(a,u)-\hat D(v,b)>\check D(u,v)-\e>r-\ddot d(xy,uv),$$which is a contradiction showing that $ab\ne xy$ and hence $ab\in\dom[D]\setminus\{xy\}= E_d $.

Assuming that $\hat D(a,u)+\hat D(v,b)=\hat d(a,u)+\hat d(v,b)$, we conclude that
$$\check d(u,v)\ge d(ab)-\hat d(a,u)-\hat d(v,b)=D(ab)-\hat D(a,u)-\hat D(v,b)>\check D(u,v)-\e\ge\check d(u,v),$$which is a contradiction showing that $\hat D(a,u)+\hat D(v,b)\ne \hat d(a,u)+\hat d(v,b)$ and hence $\hat D(a,u)+\hat D(v,b)<\hat d(a,u)+\hat d(v,b)$, by Lemma~\ref{cl:1}.

Assuming that $\hat D(a,u)<\hat d(a,u)$ and $\hat D(v,b)<\hat d(v,b)$ and applying Lemma~\ref{cl:3}, we conclude that $\hat D(a,u)=r+\ddot d(au,xy)$ and $\hat D(v,b)=r+\ddot d(vb,xy)$. By the definition of $\ddot d(au,xy)$ and $\ddot d(bv,xy)$, there exist elements $x',y'\in \{x,y\}$ such that $\hat d(a,x')\le \ddot d(au,xy)$ and $\hat d(y',b)\le\ddot d(xy,vb)$. Then
$$
\begin{aligned}
\check D(u,v)-\e&<D(ab)-\hat D(a,u)-\hat D(v,b)=d(ab)-\ddot d(au,xy)-r-\ddot d(xy,vb)-r\\
&\le d(ab)-\hat d(a,x')-\hat d(y',b)-2r\le \hat d(x',y')-2r\le \hat d(x,y)-2r\le\check D(u,v)-\e,
\end{aligned}
$$
which is a contradiction showing that either $\hat D(a,u)<\hat d(a,u)$ and $\hat D(v,b)=\hat d(v,b)$ or $\hat D(a,u)=\hat d(a,u)$ and $\hat D(v,b)<\hat d(v,b)$.

First assume that  $\hat D(a,u)<\hat d(a,u)$ and $\hat D(v,b)=\hat d(v,b)$. By Lemma~\ref{cl:3}, $\hat D(a,u)=r+\ddot d(au,xy)$. Applying Lemma~\ref{l:double}, we obtain 
$$
\begin{aligned}
\check d(u,v)&\ge d(ab)-\hat d(a,u)-\hat d(v,b)\ge d(ab)-(\ddot d(au,xy)+\hat d(x,y))-\hat d(v,b)\\
&=D(ab)-\hat D(a,u)+r-\hat d(x,y)-\hat D(v,b)>\check D(u,v)-\e+r-\hat d(x,y)
\end{aligned}
$$
and hence $\check D(u,v)-\check d(u,v)\le \hat d(x,y)-r$ by the condition (i) in the choice of $\e$.

On the the other hand,
$$\check d(x,y)\ge d(ab)-\hat d(a,x)-\hat d(y,b)\ge d(ab)-\hat d(a,x)-\hat d(y,u)-\hat d(u,b)$$
and 
$$\check d(x,y)=\check d(y,x)\ge d(ab)-\hat d(a,y)-\hat d(x,b)\ge d(ab)-\hat d(a,y)-\hat d(x,u)-\hat d(u,b)$$
imply
$$
\begin{aligned}
\check d(x,y)&\ge\max\{d(ab)-\hat d(a,x)-\hat d(y,u)-\hat d(u,b),
d(a,b)-\hat d(a,y)-\hat d(x,u)-\hat d(u,b)\}\\
&=d(ab)-\min\{\hat d(a,x)+\hat d(y,u),\hat d(a,y)+\hat d(x,u)\}-\hat d(u,b)\\
&\ge d(ab)-\ddot d(au,xy)-\hat d(u,v)-\hat d(v,b)=D(ab)-\hat D(a,u)+r-\hat d(u,v)-\hat D(v,b)\\
&>\check D(u,v)-\e+r-\hat d(u,v)
\end{aligned}
$$
and $$\hat d(u,v)-\check D(u,v)\ge r-\check d(x,y),$$
by the condition (ii) in the choice of $\e$.

By analogy we can prove the inequalities 
$\check D(u,v)-\check d(u,v)\le\hat d(x,y)-r$ and $\hat d(u,v)-\check D(u,v)\ge r-\check d(x,y)$ in case $\hat D(a,u)=\hat d(a,u)$ and $\hat D(v,b)<\hat d(v,b)$.
\end{proof}

The final statement (5) of Proposition~\ref{p:step} is proved in our final lemma.

\begin{lemma}\label{l:7} If $\frac13\check d(x,y)+\frac23\hat d(x,y)\le r\le\hat d(x,y)$, then $$\hat D(u,v)-\check D(u,v)\ge\min\{\hat d(u,v)-\check d(u,v),\hat d(x,y)-r,2\ddot d(xy,uv)\}.$$
\end{lemma}

\begin{proof} If $r=\hat d(x,y)$, then 
$$\hat D(u,v)-\check D(u,v)\ge 0=\hat d(x,y)-r=\min\{\hat d(u,v)-\check d(u,v),\hat d(x,y)-r,2\ddot d(xy,uv)\}$$and we are done. So, assume that $r<\hat d(x,y)$ and hence $\check d(x,y)<\hat d(x,y)$. Then 
 $$\hat d(x,y)-2r\le\hat d(x,y)-\tfrac23\check d(x,y)-\tfrac 43\hat d(x,y)<0\le \check D(u,v).$$

To prove the displayed inequality in Lemma~\ref{l:7}, we consider separately 6 cases.
\smallskip

1. If $\hat D(u,v)=\hat d(u,v)$ and $\check D(u,v)=\check d(u,v)$, then
 $$\hat D(u,v)-\check D(u,v)=\hat d(u,v)-\check d(u,v)\ge\min\{\hat d(u,v)-\check d(u,v),\hat d(x,y)-r,2\ddot d(xy,uv)\}.$$

 2. If $\hat D(u,v)\ne \hat d(u,v)$ and $\check D(u,v)=\check d(u,v)$, then 
 $$\hat D(u,v)-\check D(u,v)=\hat D(u,v)-\check d(u,v)\ge r-\check d(x,y)\ge\hat d(x,y)-r,$$
 by Lemma~\ref{cl:5} and the inequality $\frac13\check d(x,y)+\frac23\hat d(x,y)\le r<\hat d(x,y)$.
 \smallskip
 
 3. If $\hat D(u,v)=\hat d(u,v)$ and $\check d(u,v)\ne\check D(u,v)\ne r-\ddot  d(xy,uv)$, then 
 $$\hat D(u,v)-\check D(u,v)=\hat d(u,v)-\check D(u,v)\ge r-\check d(x,y)\ge \hat d(x,y)-r,$$
 by Lemma~\ref{cl:6} and the inequality $\frac13\check d(x,y)+\frac23\hat d(x,y)\le r<\hat d(x,y)$.
 \smallskip

 4. If $\hat D(u,v)=\hat d(u,v)$ and $\check d(u,v)\ne\check D(u,v)=r-\ddot d(xy,uv)$, then by Lemma~\ref{l:double}, $\hat d(x,y)\le \hat d(u,v)+\ddot d(xy,uv)=\hat D(u,v)+r-\check D(u,v)$ and hence
 $\hat D(u,v)-\check D(u,v)\ge \hat d(x,y)-r.$
 \smallskip

5. If $\hat D(u,v)\ne \hat d(u,v)$ and $\check d(u,v)\ne\check D(u,v)=r-\ddot d(xy,uv)$, then 
$$\hat D(u,v)-\check D(u,v)=(r+\ddot d(xy,uv))-(r-\ddot d(xy,uv))=2\ddot d(xy,uv),$$
by Lemma~\ref{cl:3}.
\smallskip

6. Finally assume that $\hat D(u,v)\ne \hat d(u,v)$ and $\check d(u,v)\ne \check D(u,v)\ne r-\ddot d(xy,uv)$.  By Lemma~\ref{cl:6}, 
 $$\hat d(u,v)-\check D(u,v)\ge r-\check d(x,y).$$
 On the other hand, $$\hat d(u,v)-\hat D(u,v)\le \hat d(x,y)-r,$$ by Lemma~\ref{cl:3}. Subtracting these inequalities, we obtain
\begin{multline*}\hat D(u,v)-\check D(u,v)=\hat d(u,v)-\check D(u,v)-(\hat d(u,v)-\hat D(u,v))\ge r-\check d(x,y)-\hat d(x,y)+r\\ \ge\tfrac23\check d(x,y)+\tfrac43\hat d(x,y)-\check d(x,y)-\hat d(x,y)=\tfrac13(\hat d(x,y)-\check d(x,y))\ge \hat d(x,y)-r.
\end{multline*}

\end{proof}
 \end{proof}

\section{Proof of Theorem~\ref{t:main}}\label{s:main}

Let $d$ be a floppy graph metric of a set $X$ and $\Tau$ be the family of all nonempty open sets in $\IR_+$. Assuming that the cardinal $\lambda=|[X^2]\setminus  E_d |$ is at most countable, we shall describe a winning strategy of the player I in the metric-extending game $\Game_d(\lambda,\Tau)$.
First we give a precise definition of a (winning) strategy of the first player in the game $\Game_d(\lambda,\Tau)$.

Let $\IR_+^{<\lambda}\defeq\bigcup_{\alpha\in\kappa}\IR_+^\alpha$. A {\em strategy} of the player I in the game $\Game_d(\lambda,\Tau)$ is a function $\Stra:\IR^{<\lambda}\to [X]^2\times \Tau$. The function $\Stra$ determines unique functions $\Stra_1:\IR_+^{<\lambda}\to[X]^2$ and $\Stra_2:\IR_+^{<\lambda}\to\Tau$ such that $\Stra(r)=\langle\Stra_1(r),\Stra_2(r)\rangle$ for all $r\in \IR_+^{<\lambda}$.

 A strategy $\Stra:\IR_+\to[X]^2\times\Tau$ of the player I is {\em winning} if for every function $s\in\IR_+^\lambda$ with $s(\alpha)\in\Stra_2(r{\restriction}_\alpha)$ for all $\alpha\in\lambda$, the set $d\cup\{\langle \Stra_1(s{\restriction}_\alpha),s(\alpha)\rangle:\alpha\in\kappa\}$ is a full metric on $X$. 
 
Now we are ready to define a wining strategy $\Stra:\IR_+\to[X]^2\times\Tau$ of the player I in the game $\Game_d(\lambda,\Tau)$. Fix any bijective function $f:\lambda\to [X^2]\setminus  E_d $ and for every ordinal $\alpha\in\lambda$, find points $x_\alpha,y_\alpha\in X$ such that $f(\alpha)=x_\alpha y_\alpha\defeq\{x_\alpha,y_\alpha\}$. 
 
 For every ordinal $\beta\in\lambda$ and function $s\in\IR_+^\beta$, consider the set $D_s=d\cup\{\langle f(\alpha),r(\alpha)\rangle:\alpha\in\beta\}$. Choose any nonempty open set $U_s\subseteq\IR_+$ such that $$U_s\subseteq \{r\in\IR_+:\tfrac13\check D_s(x_\alpha,y_\alpha)+\tfrac23\hat D_s(x_\alpha,y_\alpha)<r<\hat D_s(x_\alpha,y_\alpha)\}$$if $D_s$ is a floppy graph metric on $X$.
 
 Define a strategy $\Stra:\IR_+^{<\lambda}\to [X]^2\times\Tau$ assigning to every  function $s\in\IR_+^{<\lambda}$ the pair $\langle x_\alpha y_\alpha,U_s\rangle$ where $\alpha$ is a unique ordinal such that $s\in\IR_+^\alpha$. Let us show that the strategy $\Stra$ is winning. Fix any function $s\in\IR_+^\kappa$ such that $s(\alpha)\in \Stra_2(s{\restriction}_\alpha)$ for every $\alpha\in\kappa$.
For every ordinal $\beta\le\lambda$, consider the set $d_\beta=d\cup\{\langle x_\alpha y_\alpha,s(\alpha)\rangle:\alpha\in\beta\}$. By induction we shall prove that for every ordinal $\beta\in\kappa$, the set $d_\beta$ is a floppy graph metric on $X$.

For $\beta=0$, the function $d_0=d$ is a floppy graph metric on $X$ by the choice of $d$. Assume that for some nonzero ordinal $\beta\in\kappa$ we have proved that that the sets $d_\alpha$, $\alpha\in\beta$, are floppy graph metrics on $X$. Since $\beta<\lambda\le\w$, the ordinal $\beta$ is a successor ordinal and hence $\beta=\alpha+1$ for some ordinal $\alpha$. By the inductive hypothesis, the set $d_\alpha$ is a floppy graph metric on $X$. The definition of $d_\alpha$ ensures that $\dom[d_\alpha]= E_d \cup\{x_\gamma y_\gamma:\gamma\in\alpha\}$ and hence $x_\alpha y_\alpha\notin\dom[d_\alpha]$. Since $$s(\alpha)\in \Stra_2(s{\restriction}_\alpha)=U_{s{\restriction}_\alpha}\subseteq \{r\in\IR_+:\tfrac13\check d_\alpha(x_\alpha,y_\alpha)+\tfrac23\hat d_\alpha(x_\alpha,y_\alpha)<r<\hat d_\alpha(x_\alpha,y_\alpha)\},$$the set $d_\beta=d_\alpha\cup\{\langle x_\alpha y_\alpha,r(\alpha)\rangle\}$ is a floppy graph metric on $X$, by Theorem~\ref{t:step}.

After completing the inductive construction, observe that $d_\lambda=\bigcup_{\alpha\in\lambda}d_\alpha$ is a full metric on $X$, extending the graph metric $d$, and witnessing that the strategy $\Stra$ of the player I in the metric-extending game $\Game_d(\lambda,\Tau)$ is winning.

\section{Proof of Theorem~\ref{t:main2}}\label{s:main2}

Let $d$ be a graph metric on a set $X$ such that the subspace $[X]^2\setminus E_d $ of the metric space $([X]^2,\ddot d)$ is not discrete, and let $(F_{xy})_{xy\in [X]^2\setminus E_d }$ be any indexed family of non-degenerated subsets of the real line. Since $[X]^2\setminus E_d $ is not discrete, there exists a doubleton $p\in [X]^2\setminus  E_d $ such that for every $\e\in\IR_+$ there exists a doubleton $q\in[X]^2\setminus  E_d $ with $q\ne p$ and $\ddot d(p,q)<\e$. In particular, there exists a doubleton $q\in [X]^2\setminus E_d $ such that $q\ne p$ and $\ddot d(p,q)<\frac13\diam(F_p)$, where $\diam(F_p)\defeq\sup\{|x-y|:x,y\in F_p\}$. Choose any real number $r_q\in F_q$. Assuming that $|x-r_q|\le \ddot d(p,q)$ for all $x\in F_p$, we conclude that $|x-y|\le|x-r_q|+|r_q-y|\le 2\ddot d(p,q)<\frac 23\diam(F_p)$ for all $x,y\in F_p$ and hence $0<\diam(F_p)\le\frac23\diam(F_p)$, which is a contradiction showing that $|r_p-q_q|>\ddot d(p,q)$ for some real number $r_p\in F_p$.

Choose any function $r\in \prod_{xy\in [X]^2\setminus  E_d }F_{xy}$ such that $r(p)=r_p$ and $r(q)=r_q$. Assuming that the set $D\defeq d\cup\{\langle xy,r(xy)\rangle:xy\in [X]^2\setminus[d]\}$ is a full metric on $X$, we can apply Lemma~\ref{l:double} and conclude that $$|r_p-r_q|=|D(p)-D(q)|=|\hat D(p)-\hat D(q)|\le\ddot D(p,q)\le \ddot d(p,q),$$
which contradicts the choice of $r_p$.

\section{Proof of Theorem~\ref{t:main3}}\label{s:main3}
 
 Let $d$ be a graph metric on a set $X$ such that the metric space $([X]^2,\ddot d)$ contains a separable uncountable subspace $S\subseteq [X]^2\setminus E_d $. We need to prove that for any ordinal $\lambda$ and any family $\F$ of nondegenerated subsets of the real line,  the player II has a winning strategy in the metric-extending game $\Game_d(\lambda,\F)$. 
 First we give a precise definition of a (winning) strategy of the player II in the metric-extending game.

A {\em strategy} of the player II in the game $\Game_d(\lambda,\F)$ is a function $\Strat:\dom[\Strat]\to\IR$, defined on the set $\dom[\Strat]\defeq\bigcup_{\beta\in\lambda}([X]^2\times\F)^{\beta+1}$. Every element $p\in\dom[\Strat]$ is a function $p:\beta+1\to [X]^2\times\F$, which determines unique functions $p_1:\beta+1\to[X]^2$ and $p_2:\beta+1\to\F$ such that $p(\alpha)=\langle p_1(\alpha),p_2(\alpha)\rangle$ for every $\alpha\in\beta+1$.

A strategy $\Strat$ of the player II is {\em winning} if for every function $p:\lambda\to [X]^2\times \F$  the set $D\defeq d\cup\{\langle p_1(\alpha),\Strat(p{\restriction}_{\alpha+1})\rangle:\alpha\in\kappa\}$ is not a full metric on $X$.

It remains to describe a winning strategy of the player II in the game $\Game_d(\lambda,\F)$. In fact, the strategy is very simple: just wait for a convenient moment for destroying the triangle inequality. A formal definition of this strategy is as follows.

Inductively we shall construct a transfinite sequence $(\Strat_\beta)_{\beta\in\lambda}$ of functions $\Strat_\beta:([X]^2\times\F)^{\beta+1}\to\IR$ such that for every ordinal $\beta\in\lambda$ and every $p\in([X]^2\times\F)^{\beta+1}$, the following two conditions hold:
\begin{enumerate}
\item $\Strat_\beta(p)\in p_2(\beta)$, and
\item if $p_1(\beta)\in S$ and the set $$A_{p}\defeq\big\{\alpha\in\beta: p_1(\alpha)\in S\;\wedge\;\exists x\in p_2(\beta)\;\; \ddot d(p_1(\alpha),p_1(\beta))<|\Strat_\alpha(p{\restriction}_{\alpha+1})-x|\big\}$$ is not empty, then $\ddot d(p_1(\alpha),p_1(\beta))<|\Strat_\alpha(p{\restriction}_{\alpha+1})-\Strat_\beta(p)|$ for the ordinal $\alpha\defeq\min A_p$.
\end{enumerate}
Let us show that the function $\Strat\defeq\bigcup_{\beta\in\lambda}\Strat_\beta$ is a winning strategy of the player II in the game $\Game_d(\lambda,\F)$. To derive a contradiction, assume that this strategy is not winning. Then there exists a function $p:\lambda\to [X]^2\times\F$ such that the function $D\defeq d\cup \{\langle p(\alpha),\Strat_\alpha(p{\restriction}_{\alpha+1})\rangle:\alpha\in\lambda\}$ is a full metric on $X$. Since $\dom[D]=[X]^2$, the set $p_1^{-1}[S]\subseteq\lambda$ is uncountable. Then there exists an increasing function $\mu:\w_1\to \lambda$ such that $\mu[\w_1]\subseteq p^{-1}[S]$. Choose a function $\pi:\omega_1\to[\IR]^2$ such that $\Strat_\alpha(p{\restriction}_{\mu(\alpha)+1})\in \pi(\alpha)\subseteq p_2(\mu(\alpha))$ for every $\alpha\in \w_1$. Consider the function $$\eta:\w_1\to S\times [\IR]^2,\quad \eta:\alpha\mapsto \langle p_1\circ\mu(\alpha),\pi(\alpha)\rangle.$$ 

Let $e:[\IR]^2\to\IR$, $e:st\mapsto |s-t|$, be the Euclidean full metric  on $\IR$. This full metric induces the metric $$\ddot e:[\IR]^2\times[\IR]^2\to\IR,\quad \ddot e(xy,st)=\min\{|x-s|+|y-t|,|x-t|+|y-s|\}.$$ It is clear that $([\IR]^2,\ddot e)$ is a  separable metric spaces. On the other hand, $(S,\ddot d{\restriction}_{S\times S})$ is a separable metric space. Then the product $S\times[\IR]^2$ is a separable metric space. By \cite[4.1.15]{Eng}, every subspace of a separable metric space is Lindel\"of. In particular, the subspace $\eta[\w_1]$ of $S\times[\IR]^2$ is Lindel\"of, which implies the existence of an ordinal $\alpha\in\w_1$ such that for every neighborhood $U$ of $\eta(\alpha)$ in $S\times [\IR]^2$, the set $\eta^{-1}[U]$ is uncountable. In particular, for the neighborhood $$U\defeq\{\langle x,y\rangle\in S\times[\IR]^2:\ddot d(x,p_1\circ \mu(\alpha))+\ddot e(y,\pi(\alpha))<\tfrac12\diam(\pi(\alpha))\},$$ the preimage $\eta^{-1}[U]\subseteq\w_1$ is uncountable and hence contains some countable ordinal $\beta>\alpha$. Consider the ordinals $\alpha'\defeq\mu(\alpha)$ and $\beta'\defeq\mu(\beta)$ in $\lambda$. The inclusion $\eta(\beta)\in U$ implies $\ddot d(p_1(\beta'),p_1(\alpha'))+\ddot e(\pi(\beta),\pi(\alpha))<\frac12\diam(\pi(\alpha))$. Consider the real number $x\defeq \Strat_{\alpha'}(p{\restriction}_{\alpha'+1})$.  The definition of $\pi$ ensures that $x\in \pi(\alpha)\subseteq p_2(\alpha')$. Let $y$ be the unique point of the set $\pi(\alpha)\setminus\{x\}$. It follows from $\ddot e(\pi(\beta),\pi(\alpha))<\tfrac12\diam (\pi(\alpha))$ that there exists a point $y'\in \pi(\beta)$ such that $|y'-y|\le\ddot e(\pi(\beta),\pi(\alpha))<\frac12\diam(\pi(\alpha))=\frac12|x-y|$. Then $$|x-y'|\ge|x-y|-|y'-y|>\tfrac12\diam(\pi(\alpha))>\ddot d(p_1(\alpha'),p_1(\beta'))$$ and hence $\alpha'\in A_{p{\restriction}_{\beta'}}$. In this case, the inductive condition (2) and Lemmas~\ref{l:double} and \ref{cl:1} ensure that 
$$
\begin{aligned}
\ddot d(p_1(\alpha''),p_1(\beta'))&<|\Strat_{\alpha''}(p{\restriction}_{\alpha''+1})-\Strat_{\beta'}(p{\restriction}_{\beta'+1})|=|D(p_1(\alpha''))-D(p_1(\beta')|\\
&=|\hat D(p_1(\alpha''))-\hat D(p_1(\beta')|\le \ddot D(p_1(\alpha''),p_1(\beta'))\le\ddot d(p_1(\alpha''),p_1(\beta')),
\end{aligned}
$$which is a desired contradiction witnessing that $D$ is not a full metric. Therefore, the strategy $\Strat$ of the player I in the game $\Game_d(\lambda,\F)$ is winning.

\section{Proof of Example~\ref{ex:glue}}\label{s:glue}

 Let $p$ be a graph pseudometric and $\mathcal F$ be a family of graph pseudometrics satisfying the following conditions:
 \begin{enumerate}
\item for every $f\in\F$ the intersection $V_p\cap V_f$ is not empty;
\item for every $f\in\F$ and $xy\in[V_f\cap V_p]^2$, we have $\hat p(x,y)=\hat g(x,y)$;
\item for any distinct graph pseudometrics $f,g\in\mathcal F$, we have $V_f\cap V_g\subseteq V_p$.
\end{enumerate}

Since every $f\in\{p\}\cup\F$ is a graph pseudometric, $f(xy)=\hat f(xy)$ for every $xy\in E_f$. Now the property (2) implies that $f(xy)=\hat f(xy)=\hat p(xy)=p(xy)$  for all $xy\in E_f\cap E_p$, and  the property (3) implies that for any distinct $f,g\in\{p\}\cup\F$ and every $xy\in E_f\cap E_g$, we have $xy\subseteq V_f\cap V_g\subseteq V_p$ and hence $f(xy)=\hat f(xy)=\hat p(xy)=\hat g(xy)=g(xy)$. Consequently, the set $d\defeq p\cup\bigcup\F$ is a function defined on the graph $E_d=E_p\cup\bigcup_{f\in\F}E_f$ whose set of vertices $V_d$ is equal to $\bigcup E_d=V_p\cup\bigcup_{f\in\F}V_f$. Since the graphs $E_p$ and $E_f$, $f\in\F$, are connected and $V_f\cap V_p\ne\emptyset$ for all $f\in\F$, the graph $E_d$ is connected, too. 

Our first aim is deducing the formula
$$\hat d(x,y)=\begin{cases}\hat f(x,y)&\mbox{if $x,y\in V_f$ for some $f\in\{p\}\cup \F$};\\
{\displaystyle\inf_{\substack{a\in V_f\cap V_p\\ b\in V_g\cap V_p}}\hat f(x,a)+\hat p(a,b)+\hat g(b,y)}&\mbox{if $x\in V_f$ and $y\in V_g$ for distinct $f,g\in \{p\}\cup\F$};
\end{cases}
$$
 for calculating the shortest-path pseudometric $\hat d:V_d\times V_d\to\bar\IR_+$, induced by the function $d$. This formula will be proved in Lemma~\ref{l:formula} preceded by Claim~\ref{cla:1}--\ref{cla:8}. The graph pseudometric property of the function $d$ will be proved in Lemma~\ref{l:gp} and conditions under which the graph pseudometric is floppy will be given in Lemma~\ref{l:floppy}.

 \begin{claim}\label{cla:1} For every $n\in\IN$ and every vertices $x_0,x_1,\dots,x_n\in V_d$ with $x_0,x_n\in V_p$ and $\{x_{i-1}x_i:0<i\le n\}\subseteq E_d$, we have $\hat p(x_0,x_n)\le\sum_{i=1}^nd(x_{i-1}x_i)$.
 \end{claim}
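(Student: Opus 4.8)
The plan is to prove Claim~\ref{cla:1} by induction on the length $n$ of the path. The base cases are immediate: for $n=0$ we have $x_0=x_n\in V_p$ and $\hat p(x_0,x_0)=0$ equals the empty sum, while for $n=1$ a single edge $x_0x_1\in E_d$ joining two points of $V_p$ lies either in $E_p$, in which case $d(x_0x_1)=p(x_0x_1)\ge\hat p(x_0,x_1)$, or in some $E_f$, in which case $x_0,x_1\in V_f\cap V_p$ and condition (2) gives $\hat p(x_0,x_1)=\hat f(x_0,x_1)\le f(x_0x_1)=d(x_0x_1)$.

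For the inductive step I would split into two cases according to whether the path revisits $V_p$ at an interior vertex. If some $x_k$ with $0<k<n$ lies in $V_p$, then both subpaths $x_0,\dots,x_k$ and $x_k,\dots,x_n$ are strictly shorter and have both endpoints in $V_p$; applying the inductive hypothesis to each, together with the triangle inequality for the pseudometric $\hat p$, yields $\hat p(x_0,x_n)\le\hat p(x_0,x_k)+\hat p(x_k,x_n)\le\sum_{i=1}^n d(x_{i-1}x_i)$.

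The remaining case, where $x_1,\dots,x_{n-1}\notin V_p$, is the heart of the argument, and the main obstacle is showing that such a path must live entirely inside a single petal. Since $x_1\notin V_p$, the first edge $x_0x_1$ cannot lie in $E_p$, so $x_0x_1\in E_f$ for some $f\in\F$ and $x_1\in V_f$. I would then argue by an inner induction that every edge $x_{i-1}x_i$ lies in this same $E_f$: whenever $x_{i-1}\in V_f\setminus V_p$, the edge $x_{i-1}x_i$ cannot be in $E_p$, and it cannot lie in $E_g$ for any $g\ne f$, since that would force $x_{i-1}\in V_f\cap V_g\subseteq V_p$ by condition (3), contradicting $x_{i-1}\notin V_p$; hence $x_{i-1}x_i\in E_f$ and $x_i\in V_f$. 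This is exactly where condition (3) is indispensable.

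Once all vertices $x_0,\dots,x_n$ are seen to lie in $V_f$ and all edges in $E_f$, the endpoints satisfy $x_0,x_n\in V_f\cap V_p$, so each $d(x_{i-1}x_i)=f(x_{i-1}x_i)$ and the given path witnesses $\sum_{i=1}^n f(x_{i-1}x_i)\ge\hat f(x_0,x_n)$ by the definition of the shortest-path pseudometric $\hat f$; condition (2) then converts $\hat f(x_0,x_n)$ into $\hat p(x_0,x_n)$, completing the induction.
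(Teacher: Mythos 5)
Your proof is correct and follows essentially the same strategy as the paper's: induction on the path length $n$, with the key step being the inner induction showing (via condition (3)) that a segment of the path avoiding $V_p$ must stay inside a single $E_f$, after which condition (2) converts $\hat f(x_0,x_n)$ into $\hat p(x_0,x_n)$. The only difference is organizational --- the paper extracts the last maximal excursion outside $V_p$ and applies the inductive hypothesis to the two flanking subpaths together with a three-term triangle inequality for $\hat p$, whereas you first split at any interior vertex lying in $V_p$ and reduce to a path confined to one petal; both decompositions work.
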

 
\begin{proof} This claim will be proved by induction on $n$. For $n=1$ it is true as $p\subseteq d$. Assume that for some integer $n\ge 2$ we have proved that for every positive $k<n$ and points $x_0,\dots,x_k\in V_d$ with $x_0,x_k\in V_p$ and $\{x_{i-1}x_i:0<i\le k\}\subseteq E_d$ we have $\hat p(x_0,x_k)\le\sum_{i=1}^kd(x_{i-1}x_i)$.

Take any sequence $x_0,\dots,x_n\in V_d$ with $x_0,x_n\in V_p$ and $\{x_{i-1}x_i:0<i\le n\}\subseteq E_d$. If  $\{x_i:0\le i\le n\}\subseteq V_p$, then $$\hat p(x_0x_n)\le \sum_{i=1}^n\hat p(x_{i-1},x_i)\le\sum_{i=1}^np(x_{i-1}x_i)=\sum_{i=1}^nd(x_{i-1}x_i)$$and we are done. So, assume that  $\{x_i:0\le i\le n\}\not\subseteq V_p$ and  let $m\in\{0,\dots,n\}$ be the largest number such that $x_m\notin V_p$. It follows from $x_n\in V_p$ that $m<n$. The maximality of $m$ ensures that $x_{m+1}\in V_p$. Let $k\in\{1,\dots,m\}$ be the smallest number such that $\{x_i:k\le i\le m\}\cap V_p=\emptyset$. The minimality of $k$ ensures that $x_{k-1}\in V_p$. Since $x_{k-1}x_{k}\in E_d\setminus E_p\subseteq\bigcup_{f\in\F}E_f$, there exists $f\in\F$ such that $x_{k-1}x_{k}\in E_f$.
By induction we shall prove that $x_{i-1}x_{i}\in E_f$ for all $i\in\{k,\dots,m+1\}$. For $i=k$ the inclusion $x_{i-1}x_i\in E_f$ follows from the choice of $E_f$. Assume that for some $i\in\{k,\dots,m\}$ we have proved that $x_{i-1}x_i\in E_f$. Assuming that $x_ix_{i+1}\notin E_f$ and taking into account that $x_i\notin V_p$, we conclude that $x_ix_{i+1}\in E_g$ for some $g\in\F\setminus\{f\}$. The property (3) of the family $\F$ ensures that $x_i\in x_{i-1}x_i\cap x_ix_{i+1}\in V_f\cap V_g\subseteq V_p$, which is a contradiction showing that $x_ix_{i+1}\in E_f$. The triangle inequality for the full pseudometric $\hat f$ ensures that
$$\hat f(x_{k-1},x_{m+1})\le \sum_{i=k}^{m+1}\hat f(x_{i-1},x_i)\le\sum_{i=k}^{m+1}f(x_{i-1}x_i)=\sum_{i=k}^{m+1}d(x_{i-1}x_i).$$ By the property (2) of the family $\F$, $\hat p(x_{k-1},x_{m+1})=\hat f(x_{k-1},x_{m+1})$.

On the other hand, the triangle inequality for the  pseudometric $\hat p$ ensures that
$$
\begin{aligned}
\hat p(x_0x_n)&\le \hat p(x_0x_{k-1})+\hat p(x_{k-1}x_{m+1})+\hat p(x_{m+1},x_n)\\
&=\hat p(x_0x_{k-1})+\hat f(x_{k-1}x_{m+1})+\hat p(x_{m+1},x_n)\\
&\le
\sum_{i=1}^{k-1}d(x_{i-1}x_i)+\sum_{i=k}^{m+1}d(x_{i-1}x_i)+\sum_{i=m+2}^nd(x_{i-1}x_i)=\sum_{i=1}^nd(x_{i-1}x_i).
\end{aligned}
$$
by the inductive assumption, applied to the chains $x_0,\dots, x_{k-1}$ and $x_{m+1},\dots,x_n$. 
\end{proof}

\begin{claim}\label{cla:2} For every $x,y\in V_p$ we have $\hat d(x,y)=\hat p(x,y)$.
\end{claim}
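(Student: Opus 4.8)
The plan is to establish the two inequalities $\hat d(x,y)\le\hat p(x,y)$ and $\hat p(x,y)\le\hat d(x,y)$ separately; together they give the claim. Throughout I use that $d$ restricts to $p$ on $E_p$ and that $E_p\subseteq E_d$, both recorded in the construction of $d=p\cup\bigcup\F$.

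For the inequality $\hat d(x,y)\le\hat p(x,y)$, I would observe that every chain $x_0,\dots,x_n$ with $x_0x_n=xy$ and $\{x_{i-1}x_i:0<i\le n\}\subseteq E_p$ is also admissible in the definition of $\hat d(x,y)$, since $E_p\subseteq E_d$; moreover along such a chain $d(x_{i-1}x_i)=p(x_{i-1}x_i)$ because $p\subseteq d$. Hence the infimum defining $\hat d(x,y)$ ranges over a collection of chains that includes all chains used for $\hat p(x,y)$, with identical summands, and therefore $\hat d(x,y)\le\hat p(x,y)$.

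For the reverse inequality I would appeal to Claim~\ref{cla:1} directly. Given an arbitrary chain $x_0,\dots,x_n\in V_d$ with $x_0x_n=xy$ and $\{x_{i-1}x_i:0<i\le n\}\subseteq E_d$, the endpoints satisfy $x_0,x_n\in\{x,y\}\subseteq V_p$, so the hypotheses of Claim~\ref{cla:1} are met and we obtain $\hat p(x,y)\le\sum_{i=1}^n d(x_{i-1}x_i)$. Passing to the infimum over all such chains yields $\hat p(x,y)\le\hat d(x,y)$, and combining with the first inequality completes the proof.

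I do not expect any genuine obstacle here: the substantive work has already been carried out in Claim~\ref{cla:1}. The only point that needs checking is that both endpoints of every competing chain lie in $V_p$, and this is precisely guaranteed by the assumption $x,y\in V_p$, so Claim~\ref{cla:1} applies verbatim.
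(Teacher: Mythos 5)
Your proof is correct and follows essentially the same route as the paper: the inequality $\hat d(x,y)\le\hat p(x,y)$ comes from $E_p\subseteq E_d$ and $p\subseteq d$ (the paper phrases this as a proof by contradiction, you state it directly), while $\hat p(x,y)\le\hat d(x,y)$ is obtained by applying Claim~\ref{cla:1} to an arbitrary competing chain and passing to the infimum. No gaps.
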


\begin{proof} Claim~\ref{cla:1} and the definition of the pseudometric $\hat d$ ensure that $\hat p(x,y)\le \hat d(x,y)$. Assuming that $\hat p(x,y)<\hat d(x,y)$, we can find a sequence of vertices $x_0,\dots,x_n\in V_p$ such that $x_0x_n=xy$, $\{x_{i-1}x_i:0<i\le n\}\subseteq E_p$ and $\sum_{i=1}^np(x_{i-1}x_i)<\hat d(x,y)$. It follows from $p\subseteq d$  and $\{x_{i-1}x_i:0<i\le n\}\subseteq E_p\subseteq E_d$ that $$\hat d(x,y)\le \sum_{i=1}^nd(x_{i-1}x_i)=\sum_{i=1}^n p(x_{i-1}x_i)<\hat d(x,y),$$which is a contradiction showing that $\hat p(x,y)=\hat d(x,y)$.
\end{proof}

\begin{claim}\label{cla:3} For every $f\in\F$ and $x,y\in V_p\cap V_f$ we have $\hat d(x,y)=\hat p(x,y)=\hat f(x,y)$.
\end{claim}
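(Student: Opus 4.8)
The plan is to obtain the two asserted equalities separately, each as an immediate consequence of a fact already in hand, and then chain them. The real work has been done upstream: Claim~\ref{cla:1} (via its induction) and Claim~\ref{cla:2} already compare the glued shortest-path pseudometric $\hat d$ with $\hat p$ on $V_p$, and the compatibility of $\hat p$ with each $\hat f$ on the overlap is a built-in hypothesis of the gluing construction. So this claim is essentially a bookkeeping corollary.

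First I would use the hypothesis $x,y\in V_p\cap V_f$ to note that, in particular, both $x$ and $y$ lie in $V_p$. Hence Claim~\ref{cla:2} applies verbatim and yields the first equality
$$\hat d(x,y)=\hat p(x,y).$$
Second, since $x,y\in V_f\cap V_p$, the pair $xy$ belongs to $[V_f\cap V_p]^2$ whenever $x\ne y$, so condition~(2) in the hypotheses on the family $\F$ gives directly the second equality $\hat p(x,y)=\hat f(x,y)$. Combining the two equalities produces $\hat d(x,y)=\hat p(x,y)=\hat f(x,y)$, as required.

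The only point meriting a remark is the degenerate case $x=y$, in which $xy\notin[V_f\cap V_p]^2$ and so condition~(2) does not literally apply; but then all three shortest-path pseudometrics vanish on the diagonal (take the empty path, of length zero, from a vertex to itself), so the three quantities are trivially equal and the claim holds in this case as well. There is no genuine obstacle here: once Claim~\ref{cla:2} and the compatibility assumption~(2) are available, the statement follows by a one-two substitution, with the diagonal handled separately.
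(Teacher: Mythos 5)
Your proof is correct and matches the paper's own argument exactly: the first equality is Claim~\ref{cla:2} applied to $x,y\in V_p$, and the second is hypothesis~(2) on the family $\F$. The extra remark on the diagonal case $x=y$ is a harmless (and slightly more careful) addition that the paper leaves implicit.
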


\begin{proof} The equality $\hat d(x,y)=\hat p(x,y)$ follows from Claim~\ref{cla:2} and $\hat p(x,y)=\hat f(x,y)$ follows from the property (2) of the family $\F$. 
\end{proof}

\begin{claim}\label{cla:4} For every $f\in\F$ and vertices $x\in V_f\setminus V_p$ and $y\in V_f\cap V_p$, we have $\hat d(x,y)=\hat f(x,y)$.
\end{claim}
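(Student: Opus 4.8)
The plan is to establish the two inequalities $\hat d(x,y)\le\hat f(x,y)$ and $\hat f(x,y)\le\hat d(x,y)$ separately. The first is immediate: since $E_f\subseteq E_d$ and $f\subseteq d$, every chain realizing a value of $\hat f(x,y)$ is also an admissible chain for $\hat d(x,y)$ of the same length, so passing to the infimum over the larger family of chains gives $\hat d(x,y)\le\hat f(x,y)$.

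For the reverse inequality I would take an arbitrary chain $x_0=x,\dots,x_n=y$ with $\{x_{i-1}x_i:0<i\le n\}\subseteq E_d$ and show $\sum_{i=1}^n d(x_{i-1}x_i)\ge\hat f(x,y)$; taking the infimum then yields $\hat d(x,y)\ge\hat f(x,y)$. Since $x_0=x\notin V_p$ while $x_n=y\in V_p$, there is a smallest index $k$ with $x_k\in V_p$, so that $x_0,\dots,x_{k-1}\notin V_p$. The first key step is to prove, by induction on $i\in\{1,\dots,k\}$, that $x_{i-1}x_i\in E_f$ and $x_i\in V_f$: starting from $x_0\in V_f\setminus V_p$, whenever $x_{i-1}\in V_f\setminus V_p$ the edge $x_{i-1}x_i$ cannot lie in $E_p$, hence it lies in some $E_g$ with $g\in\F$; but then $x_{i-1}\in V_f\cap V_g$, and property (3) forces $g=f$ (otherwise $x_{i-1}\in V_p$), whence $x_{i-1}x_i\in E_f$ and $x_i\in V_f$. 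In particular $x_k\in V_f\cap V_p$.

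This localization splits the chain at $x_k$. On the initial segment all edges lie in $E_f$, so the triangle inequality for $\hat f$ gives $\sum_{i=1}^k d(x_{i-1}x_i)=\sum_{i=1}^k f(x_{i-1}x_i)\ge\hat f(x,x_k)$. On the terminal segment $x_k,\dots,x_n$ both endpoints lie in $V_p$, so Claim~\ref{cla:1} gives $\sum_{i=k+1}^n d(x_{i-1}x_i)\ge\hat p(x_k,y)$, and since $x_k,y\in V_f\cap V_p$, Claim~\ref{cla:3} identifies $\hat p(x_k,y)=\hat f(x_k,y)$. Adding the two estimates and applying the triangle inequality for $\hat f$ once more yields $\sum_{i=1}^n d(x_{i-1}x_i)\ge\hat f(x,x_k)+\hat f(x_k,y)\ge\hat f(x,y)$, as desired.

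The main obstacle is the inductive step confining the initial segment of the chain to $E_f$; this is the only place where the separation hypothesis (3) is genuinely used, and it is the exact analogue of the corresponding localization argument inside the proof of Claim~\ref{cla:1}. Once this confinement is secured, the remainder is a routine two-piece triangle-inequality estimate invoking Claims~\ref{cla:1} and~\ref{cla:3}.
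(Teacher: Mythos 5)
Your proposal is correct and follows essentially the same route as the paper: localize the initial segment of an arbitrary connecting chain inside $E_f$ using property (3), split at the first vertex $x_k\in V_f\cap V_p$, and combine the triangle inequality for $\hat f$ with Claims~\ref{cla:1} and \ref{cla:3}. The only cosmetic differences are that you argue directly over all chains rather than by contradiction, and you bound the tail via Claim~\ref{cla:1} and $\hat p=\hat f$ where the paper uses the definition of $\hat d$ together with Claim~\ref{cla:3}.
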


\begin{proof} It follows from $f\subseteq d$ that $\hat d(x,y)\le\hat f(x,y)$. Assuming that $\hat d(x,y)<\hat f(x,y)$, we can find vertices $x_0,\dots,x_n\in V_d$ such that $x_0=x$, $x_n=y$, $\{x_{i-1}x_i:0<i\le n\}\subseteq E_d$ and $\sum_{i=1}^nd(x_{i-1}x_i)<\hat f(x,y)$.

 Let $k\in\{0,\dots,n\}$ be the largest number such that $\{x_i:0\le i<k\}\subseteq V_f\setminus V_p$. The maximality of $k$ and $x_n=y\in V_p$ imply $x_{k}\notin V_f\setminus V_p$. We claim that $\{x_{i-1}x_i:0<i\le k\}\subseteq E_f$. In the opposite case, we can find $i\in\{1,\dots,k\}$  and $g\in(\{p\}\cup\F)\setminus\{f\}$ such that $x_{i-1}x_i\in E_g$ and hence $x_{i-1}x_i\subseteq V_g$. The property (3) of the family $\F$ ensures that $x_{i-1}x_i\subseteq V_ f\cap V_g\subseteq V_p$. If $i<k$, then $x_i\in x_{i-1}x_i\subseteq V_p$ contradicts the choice of $k$. If $i=k$, then $x_{k-1}\in x_{i-1}x_i\subseteq V_p$ contradicts the choice of $k$. In both cases we obtain a contradiction showing that $\{x_{i-1}x_i:0<i\le k\}\subseteq E_f$ and hence $x_{k}\in V_f\setminus(V_f\setminus V_p)=V_f\cap V_p$. 

The triangle inequality for the pseudometric $\hat f$ and Claim~\ref{cla:3} imply a contradiction
$$
\begin{aligned}
\hat f(x,y)&=\hat f(x_0,x_n)\le \hat f(x_0,x_{k})+\hat f(x_{k},x_n)=\hat f(x_0,x_k)+\hat d(x_k,x_n)\\
&\le\sum_{i=1}^{k} f(x_{i-1}x_i)+\sum_{i=k+1}^n d(x_{i-1}x_i)
=\sum_{i=1}^{k} d(x_{i-1}x_i)+\sum_{i=k+1}^n d(x_{i-1}x_i)\\
&=\sum_{i=1}^nd(x_{i-1}x_i)<\hat f(x,y),
\end{aligned}
$$
witnessing that $\hat d(x,y)=\hat f(x,y)$.
\end{proof}

\begin{claim}\label{cla:5} For every $f\in\F$ and vertices $x,y\in V_f\setminus V_p$, we have $\hat d(x,y)=\hat f(x,y)$.
\end{claim}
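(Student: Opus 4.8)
The inequality $\hat d(x,y)\le\hat f(x,y)$ is immediate from the inclusion $f\subseteq d$, since any chain in $E_f$ joining $x$ to $y$ is a chain in $E_d$. So the plan is to establish the reverse inequality $\hat f(x,y)\le\hat d(x,y)$ by contradiction, exactly as in the proofs of Claims~\ref{cla:2} and \ref{cla:4}. Assuming $\hat d(x,y)<\hat f(x,y)$, I would choose a chain $x_0,\dots,x_n\in V_d$ with $x_0=x$, $x_n=y$, $\{x_{i-1}x_i:0<i\le n\}\subseteq E_d$ and $\sum_{i=1}^n d(x_{i-1}x_i)<\hat f(x,y)$, and aim to derive $\hat f(x,y)\le\sum_{i=1}^n d(x_{i-1}x_i)$.

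The key local observation, which is precisely the mechanism already used in the inductive step of Claim~\ref{cla:4}, is this: if $x_i\in V_f\setminus V_p$ and $x_ix_{i+1}\in E_d$, then $x_ix_{i+1}\in E_f$, and in particular $x_{i+1}\in V_f$. Indeed, $x_ix_{i+1}$ cannot lie in $E_p$ (else $x_i\in V_p$), so it lies in some $E_g$ with $g\in\F$; and if $g\ne f$ then property (3) forces $x_i\in V_f\cap V_g\subseteq V_p$, a contradiction. Since edges are unordered, the same conclusion holds with $x_i$ and $x_{i+1}$ interchanged. Thus, as long as the chain remains in $V_f\setminus V_p$ it is forced to use only $E_f$-edges, on which $d$ agrees with $f$.

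Next I would locate where the chain meets $V_p$. If no $x_i$ lies in $V_p$, repeated use of the local observation shows every edge $x_{i-1}x_i$ lies in $E_f$, and the triangle inequality for $\hat f$ gives $\hat f(x,y)\le\sum_{i=1}^n f(x_{i-1}x_i)=\sum_{i=1}^n d(x_{i-1}x_i)$, the desired contradiction. Otherwise let $k$ be the least and $l$ the greatest index with $x_k,x_l\in V_p$; since $x_0=x$ and $x_n=y$ lie in $V_f\setminus V_p$, we have $1\le k\le l\le n-1$. Applying the local observation along the initial segment $x_0,\dots,x_k$ and (by symmetry, backwards) along the final segment $x_l,\dots,x_n$ shows that all edges in these segments lie in $E_f$ and that $x_k,x_l\in V_f\cap V_p$.

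Finally I would split the chain at $x_k$ and $x_l$ and bound the three pieces via the triangle inequality $\hat f(x,y)\le\hat f(x_0,x_k)+\hat f(x_k,x_l)+\hat f(x_l,x_n)$. The two outer pieces are at most $\sum_{i=1}^k d(x_{i-1}x_i)$ and $\sum_{i=l+1}^n d(x_{i-1}x_i)$, using $f=d$ on $E_f$ together with the triangle inequality for $\hat f$. For the middle piece, both endpoints lie in $V_f\cap V_p$, so Claim~\ref{cla:3} converts $\hat f$ into $\hat d$, giving $\hat f(x_k,x_l)=\hat d(x_k,x_l)\le\sum_{i=k+1}^l d(x_{i-1}x_i)$, where the final inequality holds because $x_k,\dots,x_l$ is a chain in $E_d$. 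Summing the three bounds yields $\hat f(x,y)\le\sum_{i=1}^n d(x_{i-1}x_i)<\hat f(x,y)$, the contradiction. The main obstacle is really just the bookkeeping: treating the degenerate case in which the chain never enters $V_p$ separately, and verifying that the first and last $V_p$-visits $x_k,x_l$ actually belong to $V_f$ so that Claim~\ref{cla:3} applies. Both points are handled cleanly by the local observation, which is where property (3) does all the work.
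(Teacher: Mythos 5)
Your proof is correct and follows essentially the same route as the paper's: the same contradiction setup, the same use of property (3) to force edges incident to $V_f\setminus V_p$ into $E_f$, the same identification of the first and last points $x_k,x_l\in V_f\cap V_p$ along the chain, and the same three-way split via the triangle inequality for $\hat f$ with Claim~\ref{cla:3} handling the middle piece. The only cosmetic difference is that the paper converts the two outer pieces to $\hat d$ via Claim~\ref{cla:4} before bounding them, whereas you bound them directly using $f=d$ on $E_f$; both are valid.
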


\begin{proof} It follows from $f\subseteq d$ that $\hat d(x,y)\le\hat f(x,y)$. Assuming that $\hat d(x,y)<\hat f(x,y)$, we can find a sequence $x_0,\dots,x_n\in V_d$ such that $x_0=x$, $x_n=y$, $\{x_{i-1}x_i:0<i\le n\}\subseteq E_d$ and $\sum_{i=1}^nd(x_{i-1}x_i)<\hat f(x,y)$. Assuming that $\{x_{i-1}x_i:0<i\le n\}\subseteq E_f$, we conclude that
$$\hat f(x,y)\le \sum_{i=1}^nf(x_{i-1}x_i)=\sum_{i=1}^nd(x_{i-1}x_i)<\hat f(x,y),$$
which is a contradiction showing that $\{x_{i-1}x_i:0<i\le n\}\not\subseteq E_f$. We claim that $\{x_i:0\le i\le n\}\not\subseteq V_f\setminus V_p$. To derive a contradiction, assume that $\{x_i:0\le i\le n\}\subseteq V_f\setminus V_p$. Since $\{x_{i-1}x_i:0<i\le n\}\not\subseteq E_f$, there exists $i\in\{1,\dots,n\}$ such that $x_{i-1}x_i\notin E_f$ and hence $x_{i-1}x_i\in E_g$ for some $g\in(\{p\}\cup\F)\setminus\{f\}$. The property (3) of the family $\F$ ensures that $x_ix_{i-1}\in V_f\cap V_g\subseteq V_p$, which contradicts our assumption $x_{i-1}x_i\subseteq V_f\setminus V_p$. This contradiction shows that $x_i\notin V_f\setminus V_p$ for some  $i\in\{0,\dots,n\}$. Let $k\in\{0,\dots,n\}$ be the largest number such that $\{x_i:0\le i<k\}\subseteq V_f\setminus V_p$ and $m\in\{0,\dots,n\}$ be the minimal number such that $\{x_i:m<i\le n\}\subseteq V_f\setminus V_p$. Since $x_0x_n=xy\subseteq V_f\setminus V_p$ and $\{x_i:0<i\le n\}\not\subseteq V_f\setminus V_p$, the numbers $k$ and $m$ are well-defined and $k\le m$. Moreover, the maximality of $k$ and the minimality of $m$ ensure that $x_{k},x_{m}\notin V_f\setminus V_p$. 

We claim that $x_{k-1}x_{k}\in E_f$. In the opposite case, $x_{k-1}x_{k}\in E_g$ for some $g\in(\{p\}\cup\F)\setminus\{f\}$ and hence $x_{k-1}\in (V_f\setminus V_p)\cap V_g=\emptyset$ by the property (3) of the family $\F$. This contradiction shows that $x_{k-1}x_{k}\in E_f$ and hence $x_{k-1}x_{k}\in V_f$ and $x_{k}\in V_f\setminus(V_f\setminus V_p)=V_f\cap V_p$. By analogy we can prove that $x_{m}\in V_f\cap V_p$. Now the triangle inequality for the pseudometric $\hat f$ and Claims~\ref{cla:3} and \ref{cla:4} ensure that
$$
\begin{aligned}
\hat f(x,y)&=\hat f(x_0,x_n)\le \hat f(x_0,x_{k})+\hat f(x_{k},x_{m})+\hat f(x_{m},x_n)=\hat d(x_0,x_{k})+\hat d(x_{k},x_{m})+\hat d(x_{m},x_n)\\
&\le\sum_{i=1}^{k}d(x_{i-1}x_i)+\sum_{i=k+1}^{m}d(x_{i-1}x_i)+\sum_{i=m+1}^nd(x_{i-1}x_i)=\sum_{i=1}^nd(x_{i-1}x_i)<\hat f(x,y),
\end{aligned}
$$which is a contradiction showing that $\hat d(x,y)=\hat f(x,y)$.
\end{proof}

\begin{claim}\label{cla:6} For every $f\in\{p\}\cup\F$ and points $x,y\in V_f$ we have $\hat d(x,y)=\hat f(x,y)$.
\end{claim}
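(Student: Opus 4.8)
The plan is to obtain this claim as a direct consequence of Claims~\ref{cla:2}--\ref{cla:5}, organized by a case analysis on whether $f=p$ and, when $f\in\F$, on the positions of $x$ and $y$ relative to the gluing vertex set $V_p$. No new computation is needed; the point is simply to verify that the four earlier claims together exhaust every possibility.

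First I would dispose of the case $f=p$. Then $x,y\in V_p$, and Claim~\ref{cla:2} yields $\hat d(x,y)=\hat p(x,y)=\hat f(x,y)$ at once (the degenerate subcase $x=y$ being trivially $0=0$).

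Next, assuming $f\in\F$ so that $x,y\in V_f$, I would split along the partition $V_f=(V_f\cap V_p)\sqcup(V_f\setminus V_p)$. If $x,y\in V_f\cap V_p$, then Claim~\ref{cla:3} applies; if $x,y\in V_f\setminus V_p$, then Claim~\ref{cla:5} applies. In the remaining mixed case, exactly one of the two points lies in $V_f\setminus V_p$ and the other in $V_f\cap V_p$, and Claim~\ref{cla:4} gives the conclusion; here one only invokes the symmetry $\hat d(x,y)=\hat d(y,x)$ and $\hat f(x,y)=\hat f(y,x)$ of the shortest-path pseudometrics to match the labeling of Claim~\ref{cla:4}, which is stated with $x\in V_f\setminus V_p$ and $y\in V_f\cap V_p$. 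Since these subcases are exhaustive, the equality $\hat d(x,y)=\hat f(x,y)$ holds for all $x,y\in V_f$.

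I do not anticipate any genuine obstacle: all the substantive work was already carried out in Claims~\ref{cla:2}--\ref{cla:5}, and the present claim merely consolidates them into the single uniform statement ``$\hat d$ and $\hat f$ agree on $V_f$'' that will be convenient for deriving the general shortest-path formula of Lemma~\ref{l:formula}.
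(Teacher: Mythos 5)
Your proposal is correct and follows essentially the same route as the paper: the case $f=p$ is handled by Claim~\ref{cla:2}, and for $f\in\F$ the three subcases according to membership of $x,y$ in $V_p$ are dispatched by Claims~\ref{cla:3}, \ref{cla:4} (with symmetry of $\hat d$ and $\hat f$ for the mixed case), and \ref{cla:5}, exactly as in the paper's own argument.
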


\begin{proof} If $f=p$, then the equality $\hat d(x,y)=\hat p(x,y)=\hat f(x,y)$ was proved in Claim~\ref{cla:2}. So, we assume that $f\ne p$ and hence $f\in\F$.

 If $x,y\in V_p$, then the equality $\hat d(x,y)=\hat p(x,y)=\hat f(x,y)$ has been proved in Claim~\ref{cla:3}.

If $x\notin V_p$ and $y\in V_f$, then the equality $\hat d(x,y)=\hat f(x,y)$ was proved in Claim~\ref{cla:4}.

If $x\in V_p$ and $y\notin V_f$, then the equality $\hat d(x,y)=\hat d(y,x)=\hat f(y,x)=\hat f(x,y)$ follows from Claim~\ref{cla:4} and the symmetry of the pseudometrics $\hat d$ and $\hat f$.

If $x,y\notin V_p$, then the equality $\hat d(x,y)=\hat f(x,y)$ was proved in Claim~\ref{cla:5}.
\end{proof}

\begin{claim}\label{cla:7} For every $x\in V_p$, $g\in \F$ and $y\in V_g$, we have $$\hat d(x,y)=\delta\defeq\inf\{\hat p(x,b)+\hat g(b,y):b\in V_g\cap V_p\}.$$
\end{claim}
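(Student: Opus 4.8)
The plan is to prove the two inequalities $\hat d(x,y)\le\delta$ and $\hat d(x,y)\ge\delta$ separately. For the upper bound I would observe that for every $b\in V_g\cap V_p$ the triangle inequality for the pseudometric $\hat d$ gives $\hat d(x,y)\le\hat d(x,b)+\hat d(b,y)$; since $x,b\in V_p$, Claim~\ref{cla:2} yields $\hat d(x,b)=\hat p(x,b)$, and since $b,y\in V_g$, Claim~\ref{cla:6} yields $\hat d(b,y)=\hat g(b,y)$. Hence $\hat d(x,y)\le\hat p(x,b)+\hat g(b,y)$ for every $b\in V_g\cap V_p$, and taking the infimum over $b$ gives $\hat d(x,y)\le\delta$.

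For the lower bound $\hat d(x,y)\ge\delta$, fix $\e>0$ and choose a chain $x_0,\dots,x_n\in V_d$ with $x_0=x$, $x_n=y$, $\{x_{i-1}x_i:0<i\le n\}\subseteq E_d$ and $\sum_{i=1}^n d(x_{i-1}x_i)<\hat d(x,y)+\e$. The goal is to locate a vertex $b\defeq x_k\in V_g\cap V_p$ splitting the chain so that $\sum_{i=1}^k d(x_{i-1}x_i)\ge\hat p(x,b)$ and $\sum_{i=k+1}^n d(x_{i-1}x_i)\ge\hat g(b,y)$; this would give $\delta\le\hat d(x,y)+\e$ and, letting $\e\to 0$, the desired inequality. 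If $y\in V_p$, then $y\in V_g\cap V_p$, and taking $b\defeq y$ in the definition of $\delta$ gives $\delta\le\hat p(x,y)$, while Claim~\ref{cla:2} gives $\hat d(x,y)=\hat p(x,y)$; so the interesting case is $y\in V_g\setminus V_p$.

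In that case I would let $k\in\{0,\dots,n\}$ be the largest index with $x_k\in V_p$; this is well defined since $x_0=x\in V_p$, and $k<n$ because $y=x_n\notin V_p$. By maximality, $x_{k+1},\dots,x_n\notin V_p$. The hard part is to show that the whole tail $x_k,\dots,x_n$ runs inside a single $E_h$ with $h=g$. Since $x_{k+1}\notin V_p$, the edge $x_kx_{k+1}$ cannot lie in $E_p$, so $x_kx_{k+1}\in E_h$ for some $h\in\F$; then, arguing exactly as in Claims~\ref{cla:4} and \ref{cla:5}, an induction on $i$ shows $x_{i-1}x_i\in E_h$ for all $i\in\{k+1,\dots,n\}$: if some $x_ix_{i+1}$ with $i>k$ were in a different $E_{g'}$, then $x_i\in V_h\cap V_{g'}\subseteq V_p$ (by property (3) of $\F$, or directly $x_i\in V_p$ when $g'=p$), contradicting $x_i\notin V_p$. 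Consequently $y=x_n\in V_h$, and since $y\in V_g\setminus V_p$ while $V_h\cap V_g\subseteq V_p$ whenever $h\ne g$, property (3) forces $h=g$. Thus $b\defeq x_k\in V_g\cap V_p$.

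Finally I would assemble the estimate. The head $x_0,\dots,x_k$ has $x_0,x_k\in V_p$, so Claim~\ref{cla:1} gives $\hat p(x,b)\le\sum_{i=1}^k d(x_{i-1}x_i)$. The tail edges all lie in $E_g$, where $d$ agrees with $g$, so $\sum_{i=k+1}^n d(x_{i-1}x_i)=\sum_{i=k+1}^n g(x_{i-1}x_i)\ge\hat g(b,y)$. Adding the two estimates yields $\delta\le\hat p(x,b)+\hat g(b,y)\le\sum_{i=1}^n d(x_{i-1}x_i)<\hat d(x,y)+\e$, and letting $\e\to 0$ completes the lower bound. Together with the upper bound this proves $\hat d(x,y)=\delta$. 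The only delicate point is the tail-confinement argument; everything else is a routine application of the already established Claims~\ref{cla:1}, \ref{cla:2} and \ref{cla:6}.
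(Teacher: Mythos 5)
Your proof is correct and follows essentially the same route as the paper: the upper bound via the triangle inequality for $\hat d$ together with Claims~\ref{cla:2} and \ref{cla:6}, and the lower bound by splitting an almost-shortest chain at a vertex $b=x_k$ that property (3) of $\F$ forces to lie in $V_g\cap V_p$. The only cosmetic difference is that you prove full confinement of the tail in $E_g$ and then estimate head and tail by Claim~\ref{cla:1} and the definition of $\hat g$, whereas the paper only identifies the single edge $x_kx_{k+1}\in E_g$ and invokes the already established identities $\hat d=\hat p$ and $\hat d=\hat g$ on the two halves.
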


\begin{proof} Assuming that $\delta<\hat d(x,y)$, we can find a point $b\in V_g\cap V_p$ such that $\hat p(x,b)+\hat g(b,y)<\hat d(x,y)$. Applying Claims~\ref{cla:2} and \ref{cla:3}, we conclude that
$$\hat p(x,b)+\hat g(b,y)<\hat d(x,y)\le \hat d(x,b)+\hat d(b,y)=\hat p(x,b)+\hat g(b,y),$$which is a desired contradiction showing that $\hat d(x,y)\le \delta$. If $y\in V_p$, then by Claim~\ref{cla:2},
$$\delta\le \hat p(x,y)+\hat g(y,y)=\hat d(x,y)+0=\hat d(x,y)$$and hence $\hat d(x,y)=\delta$.

So, assume that $y\notin V_p$. Assuming that $\hat d(x,y)<\delta$, we can find a sequence $x_0,\dots,x_n\in V_d$ such that $x_0=x$, $x_n=y$, $\{x_{i-1}x_i:0<i\le n\}\subseteq E_d$ and $\sum_{i=1}^n d(x_{i-1}x_i)<\delta$. Let $k\in\{1,\dots,n\}$ be the smallest number such that $\{x_i:k< i\le n\}\subseteq V_g\setminus V_p$. Since $x_0\in V_p$, the minimality of $k$ ensures that $x_{k}\notin V_g\setminus V_p$. Since $x_{k}x_{k+1}\in E_d$, there exists a graph pseudometric $f\in\{p\}\cup \F$ such that $x_{k}x_{k+1}\in E_f$ and hence $x_{k}x_{k+1}\subseteq V_f$. Assuming that $f\ne g$ and applying the property (3) of the family $\F$, we conclude that $x_{k+1}\in V_f\cap (V_g\setminus V_p)=\emptyset$, which is a contradiction showing that $f=g$ and hence $x_{k}\in V_f\setminus(V_g\setminus V_p)=V_g\cap V_p$. Then for the point $b=x_{k}\in V_g\cap V_p$, we can apply Claims~\ref{cla:2}, \ref{cla:3} and obtain a contradition
$$
\begin{aligned}
\hat p(x,b)+\hat g(b,y)&=\hat p(x_0,x_{k})+\hat g(x_{k},x_n)=\hat d(x_0,x_{k})+\hat d(x_{k},x_n)\\
&\le \sum_{i=1}^{k}d(x_{i-1}x_i)+\sum_{i=k+1}^nd(x_{i-1}x_i)=\sum_{i=1}^n d(x_{i-1}x_i)<\delta\le \hat p(x,b)+\hat g(b,y),
\end{aligned}$$
showing that $\hat d(x,y)=\delta$.
\end{proof}

\begin{claim}\label{cla:8} For every distinct elements $f,g\in\F$ and points  $x\in V_f$, $y\in V_g$, we have $$\hat d(x,y)=\inf\{\hat f(x,a)+\hat p(a,b)+\hat g(b,y):a\in V_f\cap V_p,\;b\in V_p\cap V_g\}.$$
\end{claim}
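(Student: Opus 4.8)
The plan is to reduce Claim~\ref{cla:8} to Claim~\ref{cla:7} by routing every path through a vertex of $V_p$. Write $\delta$ for the infimum on the right-hand side. First I would establish the \emph{bridging identity}
$$\hat d(x,y)=\inf_{w\in V_p}\big(\hat d(x,w)+\hat d(w,y)\big).$$
The inequality $\le$ is immediate from the triangle inequality for $\hat d$. For $\ge$, I would fix a path $x=x_0,\dots,x_n=y$ with $\{x_{i-1}x_i:0<i\le n\}\subseteq E_d$ whose weight $\sum_{i=1}^nd(x_{i-1}x_i)$ is close to $\hat d(x,y)$, and argue that some vertex $x_j$ of this path lies in $V_p$. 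Taking $w=x_j$ then gives $\hat d(x,w)+\hat d(w,y)\le\sum_{i=1}^nd(x_{i-1}x_i)$, and letting the path weight tend to $\hat d(x,y)$ yields the identity.

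The crossing argument is the heart of the proof, and I expect it to be the main obstacle. Suppose no $x_j$ lay in $V_p$. Then each edge $x_{i-1}x_i$ belongs to $E_d\setminus E_p$, hence to some $E_h$ with $h\in\F$; two consecutive edges must belong to the \emph{same} $E_h$, since otherwise their common vertex $x_i$ would lie in $V_h\cap V_{h'}\subseteq V_p$ by property (3). Thus the whole path lies in a single $V_h$. Since $x\in V_h\setminus V_p$ forces $h=f$ (otherwise $x\in V_f\cap V_h\subseteq V_p$), we obtain $y\in V_f\cap V_g\subseteq V_p$, contradicting $y\notin V_p$. This shows every such path meets $V_p$.

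With the bridging identity in hand, I would apply Claim~\ref{cla:7} to each summand. For $\hat d(w,y)$ (with $w\in V_p$, $y\in V_g$) it gives $\hat d(w,y)=\inf_{b\in V_g\cap V_p}\big(\hat p(w,b)+\hat g(b,y)\big)$, and for $\hat d(x,w)=\hat d(w,x)$ the symmetry of $\hat d$ combined with Claim~\ref{cla:7} (applied with first argument $w\in V_p$ and $f\in\F$) gives $\hat d(x,w)=\inf_{a\in V_f\cap V_p}\big(\hat f(x,a)+\hat p(a,w)\big)$. Substituting and merging the three infima over $a,b,w$ rewrites the right-hand side as $\inf_{a,b,w}\big(\hat f(x,a)+\hat p(a,w)+\hat p(w,b)+\hat g(b,y)\big)$. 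Finally, for fixed $a,b$ the inner infimum over $w\in V_p$ of $\hat p(a,w)+\hat p(w,b)$ equals $\hat p(a,b)$, since the choice $w=a\in V_p$ is admissible and optimal by the triangle inequality for $\hat p$. This collapses the whole expression to $\delta$, completing the proof; once the $V_p$-crossing is secured, the remaining manipulation of infima is entirely routine.
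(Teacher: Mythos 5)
Your proof is correct, but it is organized differently from the paper's. The paper first dispatches the cases $x\in V_p$ or $y\in V_p$ by combining Claim~\ref{cla:7} with the triangle inequality for $\hat p$, and then, for $x\in V_f\setminus V_p$ and $y\in V_g\setminus V_p$, runs a direct path argument: on a near-optimal chain it locates \emph{two} portal vertices $x_k\in V_f\cap V_p$ and $x_m\in V_p\cap V_g$ (the last exit from $V_f\setminus V_p$ and the first entry into $V_g\setminus V_p$), splits the chain into three segments, and converts each segment's $\hat d$-length into $\hat f$, $\hat p$, $\hat g$ via Claims~\ref{cla:2}--\ref{cla:4}. You instead isolate a single bridging identity $\hat d(x,y)=\inf_{w\in V_p}(\hat d(x,w)+\hat d(w,y))$, whose nontrivial half rests on the same crossing mechanism (property (3) of $\F$ forcing consecutive off-$V_p$ edges into one $E_h$, whence $h=f$ and $y\in V_f\cap V_g\subseteq V_p$, a contradiction), and then delegate all further splitting to two applications of Claim~\ref{cla:7} plus a routine merging of infima in which $\inf_{w\in V_p}(\hat p(a,w)+\hat p(w,b))$ collapses to $\hat p(a,b)$. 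Your factorization buys economy: it reuses Claim~\ref{cla:7} symmetrically on both sides of $w$, uniformly covers the boundary cases $x\in V_p$ or $y\in V_p$ without a separate computation, and avoids re-deriving the two-portal decomposition; the paper's one-pass argument is more self-contained and exhibits both portals explicitly, mirroring the structure of the proof of Claim~\ref{cla:7}. The one point to state carefully in your version is that the crossing argument is carried out under the assumption that the chain avoids $V_p$ entirely (so in particular $x,y\notin V_p$), which you do; otherwise the chain trivially meets $V_p$ at an endpoint.
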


\begin{proof} Let $\delta\defeq \inf \{\hat f(x,a)+\hat p(a,b)+\hat g(b,y):a\in V_f\cap V_p,\;b\in V_p\cap V_g\}$.

If $x\in V_f\cap V_p$, then for every $b\in V_p$ we have
$$\hat p(x,b)=\hat p(x,x)+\hat p(x,b)=\inf\{\hat p(x,a)+\hat p(a,b):a\in V_f\cap V_p\}=\inf\{\hat f(x,a)+\hat p(a,b):a\in V_f\cap V_a\}$$by the traingle inequality for the pseudometric $\hat p$ and Claim~\ref{cla:3}. By Claim~\ref{cla:7},
$$
\begin{aligned}
\hat d(x,y)&=\inf\{\hat p(x,b)+\hat g(b,y):b\in V_p\cap V_g\}\\
&=\inf\{\inf\{\hat f(x,a)+\hat p(a,b):a\in V_f\cap V_p\}+\hat g(b,y):b\in V_p\cap V_g\}\\
&=\inf\{\hat f(x,a)+\hat p(a,b)+\hat g(b,y):a\in V_f\cap V_p,\;b\in V_p\cap V_g\}=\delta.
\end{aligned}
$$
By analogy we can prove that $\hat d(x,y)=\delta$ if $y\in V_p$. 

So, assume that $x\in V_f\setminus V_p$ and $y\in V_g\setminus V_p$. Assuming that $\delta<\hat d(x,y)$, by the definition of $\delta$, there exist points $a\in V_f\cap V_p$ and $b\in V_p\cap V_g$ such that $\hat f(x,a)+\hat p(a,b)+\hat g(b,y)<\hat d(x,y)$. Applying the triangle inequality for the pseudometric $\hat d$ and Claims~\ref{cla:2}, \ref{cla:3}, we obtain a contradiction
$$
\hat f(x,a)+\hat p(a,b)+\hat g(b,y)<\hat d(x,y)\le \hat d(x,a)+\hat d(a,b)+\hat d(b,y)=\hat f(x,a)+\hat p(a,b)+\hat g(b,y),$$
showing that $\hat d(x,y)\le \delta$. 

Assuming that $\hat d(x,y)<\delta$, we can find a sequence $x_0,\dots,x_n\in V_d$ such that $x_0=x$, $x_n=y$, $\{x_{i-1}x_i:0<i\le n\}\subseteq E_d$ and $\sum_{i=1}^n d(x_{i-1}x_i)<\delta$. The property (3) of the family $\F$ ensures that $V_f\cap V_g\subseteq V_p$ and hence $x\in V_f\setminus V_p\subseteq V_f\setminus V_g$ and $y\in V_g\setminus V_p\subseteq V_g\setminus V_f$. Let $k\in\{0,\dots,n\}$ be the largest number such that $\{x_i:0\le i<k\}\subseteq V_f\setminus V_p$ and let $m\in\{0,\dots,n\}$ be the smallest number such that $\{x_i:m<i\le n\}\subseteq V_g\setminus V_p$. Since $x_0=x\notin V_g$ and $x_n=y\notin V_f$, the numbers $k$ and $m$ are well-defined. Repeating the argument from the proof of Claim~\ref{cla:7}, we can show that  $x_k\in V_f\cap V_p$ and $x_m\in V_p\cap V_g$.
Applying Claims~\ref{cla:2} and \ref{cla:3}, we obtain a contradiction
$$\begin{aligned}
\delta&\le \hat f(x,x_k)+\hat p(x_k,x_m)+\hat g(x_m,x_n)= \hat d(x,x_k)+\hat d(x_k,x_m)+\hat d(x_m,x_n)\\
&\le\sum_{i=1}^k d(x_{i-1}x_i)+\sum_{i=k+1}^m d(x_{i-1}x_i)+\sum_{i=m+1}^n d(x_{i-1}x_i)=\sum_{i=1}^nd(x_{i-1}x_i)<\delta,
\end{aligned}
$$
showing that $\hat d(x,y)=\delta$.
\end{proof}

\begin{lemma}\label{l:formula} For every $x,y\in V_d$, the shortest-path distance $\hat d(x,y)$ can be calculated by the formula
$$\hat d(x,y)=\begin{cases}\hat f(x,y)&\mbox{if $x,y\in V_f$ for some $f\in\{p\}\cup \F$};\\
{\displaystyle\inf_{\substack{a\in V_f\cap V_p\\ b\in V_g\cap V_p}}\hat f(x,a)+\hat p(a,b)+\hat g(b,y)}&\mbox{if $x\in V_f$ and $y\in V_g$ for distinct $f,g\in \{p\}\cup\F$}.
\end{cases}
$$
\end{lemma}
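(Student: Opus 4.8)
The plan is to recognize that Lemma~\ref{l:formula} merely repackages the previously established Claims~\ref{cla:6}, \ref{cla:7}, and \ref{cla:8} through a case analysis on the membership of $x$ and $y$ in the vertex sets $V_f$, $f\in\{p\}\cup\F$. First I would dispose of the first case of the formula: if $x,y\in V_f$ for some $f\in\{p\}\cup\F$, then the equality $\hat d(x,y)=\hat f(x,y)$ is exactly the content of Claim~\ref{cla:6}, so nothing further is needed.

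For the second case, where $x\in V_f$ and $y\in V_g$ for distinct $f,g\in\{p\}\cup\F$, I would split according to whether $p$ belongs to $\{f,g\}$. If both $f$ and $g$ lie in $\F$, then the displayed double-infimum formula is precisely the statement of Claim~\ref{cla:8}, and we are done. The only genuine work arises when one of the two pseudometrics equals $p$, say $f=p$ (the case $g=p$ being symmetric). Here $V_f\cap V_p=V_p$ and $\hat f=\hat p$, so the displayed double infimum becomes $\inf\{\hat p(x,a)+\hat p(a,b)+\hat g(b,y):a\in V_p,\ b\in V_g\cap V_p\}$, whereas Claim~\ref{cla:7} asserts $\hat d(x,y)=\inf\{\hat p(x,b)+\hat g(b,y):b\in V_g\cap V_p\}$. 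To reconcile the two, I would show that the inner infimum over $a\in V_p$ collapses, namely $\inf_{a\in V_p}\big(\hat p(x,a)+\hat p(a,b)\big)=\hat p(x,b)$ for every fixed $b\in V_g\cap V_p$: the inequality ``$\ge$'' is the triangle inequality for the pseudometric $\hat p$, while ``$\le$'' follows by taking $a=b$ and using $\hat p(b,b)=0$. Substituting this identity into the double infimum reduces it to the single infimum of Claim~\ref{cla:7}, which settles the subcase.

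I expect no serious obstacle here: all the substantive analysis — tracing a shortest chain across the glued pieces and showing it can be rerouted through the common vertices lying in $V_p$ — has already been carried out in Claims~\ref{cla:1}--\ref{cla:8}. The lemma itself is essentially an assembly step, and the only point demanding care is the bookkeeping of the subcases in the second branch, in particular the verification that the $f=p$ (respectively $g=p$) version of the formula genuinely collapses to the expression of Claim~\ref{cla:7}. That collapse is the single elementary computation in the whole argument, and once it is recorded the proof is complete.
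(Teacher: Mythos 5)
Your proposal is correct and follows essentially the same route as the paper: the first branch is Claim~\ref{cla:6} (the paper cites Claims~\ref{cla:2}--\ref{cla:3}, but your citation covers all subcases at once), the branch with $f,g\in\F$ is Claim~\ref{cla:8}, and the branch with $p\in\{f,g\}$ is obtained from Claim~\ref{cla:7} by collapsing the inner infimum $\inf_{a\in V_p}(\hat p(x,a)+\hat p(a,b))=\hat p(x,b)$ via the triangle inequality, exactly as in the paper. No gaps.
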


\begin{proof} If $x,y\in V_f$ for some $f\in\{p\}\cup\F$, then $\hat d(x,y)=\hat f(x,y)$ by Claims~\ref{cla:2} and \ref{cla:3}.

Next, assume that $x\in V_f$ and $y\in V_g$ for distinct $f,g\in\{p\}\cup\F$. 

If $f=p$, then $g\in (\{p\}\cup\F)\setminus\{f\}\subseteq \F$. Applying Claim~\ref{cla:7}, we conclude that 
$$
\begin{aligned}
\hat d(x,y)&=\inf\{\hat p(x,b)+\hat g(b,y):b\in V_g\cap V_p\}\\
&=\inf\{\inf\{\hat p(x,a)+\hat p(a,b):a\in  V_f\cap V_p\}+\hat g(b,y):b\in V_g\cap V_p\}\\
&=\inf\{\hat f(x,a)+\hat p(a,b)+\hat g(b,y):a\in V_f\cap V_p,\; b\in V_p\cap V_g\}.
\end{aligned}
$$
By analogy we can prove that $\hat d(x,y)=\inf\{\hat f(x,a)+\hat p(a,b)+\hat g(b,y):a\in V_f\cap V_p,\; b\in V_p\cap V_g\}$ if $g=p$.

If $f\ne p\ne g$, then $f,g\in\F$ and then $$\hat d(x,y)=\inf\{\hat f(x,a)+\hat p(a,b)+\hat g(b,y):a\in V_f\cap V_p,\; b\in V_p\cap V_g\}$$by Claim~\ref{cla:8}.
\end{proof}

\begin{lemma}\label{l:gp} The function $d$ is a graph pseudometric.
\end{lemma}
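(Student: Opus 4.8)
The plan is to exploit the characterization recalled in the introduction (from \cite{DMV}): a function on a connected graph $E_d$ is a graph pseudometric if and only if $d(xy)=\hat d(x,y)$ for every edge $xy\in E_d$. We already know that $E_d$ is connected and that $d$ is a well-defined function on it (established in the paragraph preceding Claim~\ref{cla:1}), so it suffices to verify this single equality for each edge. Since the inequality $\hat d(x,y)\le d(xy)$ holds automatically for every edge, the real content is to check $d(xy)\le\hat d(x,y)$, and almost all of the work needed for this has already been carried out in Lemma~\ref{l:formula}.

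Concretely, I would fix an arbitrary edge $xy\in E_d=E_p\cup\bigcup_{f\in\F}E_f$ and choose $f\in\{p\}\cup\F$ with $xy\in E_f$. Because $xy\in E_f$ forces $x,y\in V_f$, and because $d$ restricts to $f$ on $E_f$, we have $d(xy)=f(xy)$. As $f$ is itself a graph pseudometric, the same characterization gives $f(xy)=\hat f(x,y)$. Now applying the first case of Lemma~\ref{l:formula} to the points $x,y\in V_f$ yields $\hat d(x,y)=\hat f(x,y)$. Chaining these equalities,
$$d(xy)=f(xy)=\hat f(x,y)=\hat d(x,y),$$
which is exactly what the characterization requires. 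Since $xy$ was an arbitrary edge of $E_d$, this completes the argument.

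I expect no genuine obstacle here: the difficulty is entirely front-loaded into computing $\hat d$ through Lemma~\ref{l:formula} and Claims~\ref{cla:1}--\ref{cla:8}. The only subtlety worth flagging is the justification of $d(xy)=f(xy)$, i.e.\ that the union defining $d$ does not assign conflicting values on an edge shared by two members of $\{p\}\cup\F$; but this is precisely the consistency verified in the paragraph before Claim~\ref{cla:1}, where properties~(2) and~(3) of the family $\F$ guarantee $f(xy)=g(xy)$ whenever $xy\in E_f\cap E_g$. Thus the proof reduces to a short application of Lemma~\ref{l:formula} together with the fact that each glued piece is already a graph pseudometric.
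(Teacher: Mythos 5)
Your proposal is correct and follows essentially the same route as the paper: both reduce the claim to verifying $d(xy)=\hat d(x,y)$ for each edge, pick $f\in\{p\}\cup\F$ with $xy\in E_f$, and chain $d(xy)=f(xy)=\hat f(x,y)=\hat d(x,y)$ using the consistency of the union and the identification of $\hat d$ with $\hat f$ on $V_f$ (the first case of Lemma~\ref{l:formula}, equivalently Claims~\ref{cla:2}--\ref{cla:6}). Your explicit flagging of the well-definedness of $d$ on shared edges is a welcome touch, but the argument is the same.
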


\begin{proof} Given any points $x,y\in V_d$ with $xy\in E_d=E_p\cup\bigcup_{f\in \F}E_f$, we need to check that $d(xy)=\hat d(x,y)$. Find $f\in\{p\}\cup\F$ such that $xy\in E_f$. Applying Claims~\ref{cl:2} and \ref{cl:3} and taking into account that $f$ is a graph pseudometric, we conclude that
$$d(xy)=f(xy)=\hat f(xy)=\hat d(x,y).$$
\end{proof}

Now we give conditions under which the graph pseudometric $d$ is floppy.

For every vertex $v\in V_d$ and nonempty subset $B\subseteq V_d$, consider the real number $$\Lambda(v;B)\defeq \inf\{\hat d(a,x)+\hat d(x,b)-\hat d(a,b):a,b,\in B\}.$$ 

\begin{claim}\label{cla:flap1} For every $f\in V_f$, $x\in V_f\setminus V_p$ and $y\in V_p\setminus V_f$, we have $$\hat d(x,y)-\check d(x,y)\ge \delta\defeq\min\{\Lambda(x;V_f\cap V_p),\tfrac12\Lambda(y;V_p\cap V_f)\}.$$
\end{claim}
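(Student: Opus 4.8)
The plan is to reduce the claim to two inequalities and verify each. Write $\delta_x\defeq\Lambda(x;V_f\cap V_p)$ and $\delta_y\defeq\Lambda(y;V_p\cap V_f)$, so that $\delta=\min\{\delta_x,\tfrac12\delta_y\}$, and recall from Lemma~\ref{l:gp} that $d(ab)=\hat d(a,b)$ for every $ab\in E_d$. Since $\check d(x,y)=\sup_{ab\in E_d}\max\{0,\hat d(a,b)-\hat d(a,x)-\hat d(y,b)\}$, the desired inequality $\hat d(x,y)-\check d(x,y)\ge\delta$ is equivalent to the conjunction of (1) $\hat d(x,y)\ge\delta$ (which dominates the term $0$) and (2) $\hat d(a,x)+\hat d(x,y)+\hat d(y,b)\ge\hat d(a,b)+\delta$ for every edge $ab\in E_d$ and every labelling of its endpoints (which, rewritten, dominates the remaining terms). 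Throughout I would use the gate formulas $\hat d(x,y)=\inf_{c\in V_f\cap V_p}\big(\hat f(x,c)+\hat p(c,y)\big)$ and $\hat d(y,b)=\inf_{\beta\in V_f\cap V_p}\big(\hat p(y,\beta)+\hat f(\beta,b)\big)$ coming from Claim~\ref{cla:7}, together with Claims~\ref{cla:2}, \ref{cla:3}, \ref{cla:6} identifying $\hat d$ with $\hat f$ inside $V_f$ and with $\hat p$ inside $V_p$; in particular these reduce $\delta_x$ to $\inf_{\alpha,\gamma\in V_f\cap V_p}\big(\hat f(\alpha,x)+\hat f(x,\gamma)-\hat f(\alpha,\gamma)\big)$ and $\delta_y$ to the analogous $\hat p$-expression.

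For (1) I would use only the degenerate instances of $\Lambda$. Taking $\alpha=\gamma=c$ in $\delta_y$ gives $2\hat p(c,y)\ge\delta_y$, hence $\hat p(c,y)\ge\tfrac12\delta_y$ for every gate $c\in V_f\cap V_p$; therefore $\hat d(x,y)=\inf_c\big(\hat f(x,c)+\hat p(c,y)\big)\ge\tfrac12\delta_y\ge\delta$. This is precisely the step forcing the factor $\tfrac12$ in front of $\Lambda(y;V_p\cap V_f)$: the distance $\hat d(x,y)$ can be as small as half the protrusion of $y$, so $\delta$ cannot exceed $\tfrac12\delta_y$.

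For (2) I would split on the position of $a$. In Case (i), $a\notin V_f\setminus V_p$, i.e.\ $a\in V_f\cap V_p$ or $a\notin V_f$. Pushing the infimum in $\hat d(x,y)$ through the sum, it suffices to prove, for each gate $c$, both $\hat d(a,x)+\hat f(x,c)\ge\hat d(a,c)+\delta_x$ and $\hat p(c,y)+\hat d(y,b)\ge\hat d(c,b)$; adding them and using $\hat d(a,c)+\hat d(c,b)\ge\hat d(a,b)$ yields the bound with $\delta_x\ge\delta$. The second inequality is just the triangle inequality since $\hat p(c,y)=\hat d(c,y)$. For the first, if $a\in V_f\cap V_p$ then $a,c$ are gates and $\hat f(a,x)+\hat f(x,c)-\hat f(a,c)\ge\delta_x$ directly, while if $a\notin V_f$ then $\hat d(a,x)=\inf_{\alpha\in V_f\cap V_p}\big(\hat d(a,\alpha)+\hat f(\alpha,x)\big)$, and for each $\alpha$ the gate estimate $\hat f(\alpha,x)+\hat f(x,c)\ge\hat f(\alpha,c)+\delta_x$ combined with $\hat d(a,\alpha)+\hat f(\alpha,c)\ge\hat d(a,c)$ gives the claim after taking the infimum. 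In Case (ii), $a\in V_f\setminus V_p$, so $a$ is a private vertex of $f$ and every edge through it lies in $E_f$; hence $b\in V_f$ and $\hat d(a,x)=\hat f(a,x)$. Routing both $\hat d(x,y)$ and $\hat d(y,b)$ through their gates $c,\beta\in V_f\cap V_p$, the sum becomes, under a double infimum, $\hat f(a,x)+\hat f(x,c)+\hat p(c,y)+\hat p(y,\beta)+\hat f(\beta,b)$; here $\hat p(c,y)+\hat p(y,\beta)\ge\hat p(c,\beta)+\delta_y=\hat f(c,\beta)+\delta_y$ (this is $\Lambda(y;V_p\cap V_f)$ at full strength, as $c,\beta$ are gates), and the triangle inequality for $\hat f$ collapses $\hat f(a,x)+\hat f(x,c)+\hat f(c,\beta)+\hat f(\beta,b)\ge\hat f(a,b)=\hat d(a,b)$, whence the sum is $\ge\hat d(a,b)+\delta_y\ge\hat d(a,b)+\delta$.

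I expect Case (ii) to be the crux. It is the only situation in which the protrusion of $x$ is unavailable, because the endpoints $a,b$ may lie arbitrarily close to $x$ inside $V_f$, so the entire detour cost must be recovered from the protrusion of $y$ in the central block $p$; this is why the argument sends both legs incident to $y$ through the gate set $V_f\cap V_p$ and applies $\Lambda(y;V_p\cap V_f)$ between two gates. The tension between the full $\delta_y$ extracted in (2) and the halved $\tfrac12\delta_y$ that is forced in (1) is exactly what pins down the constant $\delta=\min\{\delta_x,\tfrac12\delta_y\}$.
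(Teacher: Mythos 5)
Your proof is correct and follows essentially the same route as the paper's: both rest on the gate formulas of Claims~\ref{cla:7} and~\ref{cla:8}, the identifications $\hat d=\hat f$ on $V_f$ and $\hat d=\hat p$ on $V_p$ from Claims~\ref{cla:2}--\ref{cla:6}, and a case split according to whether the relevant endpoint of the witnessing edge is a private vertex of $f$ (where $\Lambda(y;V_p\cap V_f)$ between two gates does the work) or not (where $\Lambda(x;V_f\cap V_p)$ does). The differences are purely organizational: you verify the per-edge inequalities directly rather than arguing by contradiction with an auxiliary $\varepsilon$, and your part~(1), which isolates the degenerate instance $2\hat p(c,y)\ge\Lambda(y;V_p\cap V_f)$ as the source of the factor $\tfrac12$, handles the $\check d(x,y)=0$ situation more transparently than the paper does.
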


\begin{proof} To derive a contradiction, assume that $\hat d(x,y)-\check d(x,y)<\delta$.
Since $\hat d(x,y)<\check d(x,y)+\delta$, by Claims~\ref{cla:7} and \ref{cla:2}, \ref{cla:3}, there exists a vertex $a\in V_f\cap V_p$ such that $$\hat d(x,a)+\hat d(a,y)<\check d(x,y)+\delta.$$

 If $\check d(x,y)=0$, then $\hat d(x,a)\le\hat d(x,a)+\hat d(a,y)<\delta$. Applying Claim~\ref{cla:3}, we obtain a desired contradiction: 
$
\delta>\hat d(x,a)=\tfrac 12(\hat d(a,x)+\hat d(x,a)-\hat d(a,a))\ge \tfrac12\Lambda(x;V_f\cap V_p)\ge\delta.
$

So, we assume that $\check d(x,y)>0$. In this case, the strict inequality $\check d(x,y)>\hat d(x,a)+\hat d(a,y)-\delta$ implies the existence of vertices $u,v\in V_d$ such that $uv\in E_d$ and $$\hat d(u,v)-\hat d(u,x)-\hat d(v,y)>\hat d(x,a)+\hat d(a,y)-\delta.$$ Since $uv\in E_d=\bigcup_{g\in\{p\}\cup\F}E_g$, there exists $g\in\{p\}\cup\F$ such that $uv\in E_g$. 

Depending on the location of $g$ in the set $\{p\}\cup\{f\}\cup(\F\setminus\{p,f\})$, separately we consider three cases.

1. First assume that $g=p$ and hence $uv\subseteq V_p$. In this case we obtain a contradiction as
$$
\begin{aligned}
\hat d(v,a)+\hat d(a,u)&\ge \hat d(v,u)=\hat d(u,v)\\
&>\hat d(u,x)+\hat d(y,v)+\hat d(x,a)+\hat d(a,y)-\delta\\
&\ge\hat d(u,x)+\hat d(y,v)+\hat d(x,a)+\hat d(a,y)-\Lambda(x;V_f\cap V_p)\\
&\ge \hat d(u,x)+\hat d(y,v)+\hat d(x,a)+\hat d(a,y)-(\hat d(u,x)+\hat d(x,a)-\hat d(u,a))\\
&=\hat d(y,v)+\hat d(a,y)+\hat d(u,a)\ge \hat d(v,a)+\hat d(a,u).
\end{aligned}
$$
\smallskip

2. Next, assume that $g=f$ and hence $uv\subseteq V_g=V_f$. Since $$\hat d(v,y)<\hat d(u,v)-\hat d(u,x)-\hat d(x,a)-\hat d(a,y)+\delta,$$ by Claims~\ref{cla:7} and \ref{cla:2}, \ref{cla:3}, there exists an element $\alpha\in V_f\cap V_p$ such that
$$
\begin{aligned}
\hat d(v,\alpha)&+\hat d(\alpha,y)<\hat d(u,v)-\hat d(u,x)-\hat d(x,a)-\hat d(a,y)+\delta\\
&\le\hat d(u,v)-\hat d(u,x)-\hat d(x,a)-\hat d(a,y)+\Lambda(y;V_p\cap V_f)\\
&\le \hat d(u,v)-\hat d(u,x)-\hat d(x,a)-\hat d(a,y)+(\hat d(a,y)+\hat d(y,\alpha)-\hat d(a,\alpha))\\
&=\hat d(u,v)-\hat d(u,x)-\hat d(x,a)+\hat d(y,\alpha)-\hat d(a,\alpha)
\end{aligned}
$$
and we obtain a desired contradiction:
$\hat d(u,v)\le \hat d(u,x)+\hat d(x,a)+\hat d(a,\alpha)+\hat d(\alpha,v)<d(u,v)$.
\smallskip

3. Finally, assume that $g\notin\{p,f\}$. Since $\hat d(x,u)<\hat d(u,v)-\hat d(y,v)-\hat d(x,a)-\hat d(a,y)+\delta$, by Claim~\ref{cla:7}, there exists $\alpha\in V_f\cap V_p$ such that 
$$
\begin{aligned}
\hat d(x,\alpha)&+\hat d(\alpha,u)<\hat d(u,v)-\hat d(y,v)-\hat d(x,a)-\hat d(a,y)+\delta\\
&\le \hat d(u,v)-\hat d(y,v)-\hat d(x,a)-\hat d(a,y)+\Lambda(x;V_f\cap V_p)\\
&\le \hat d(u,v)-\hat d(y,v)-\hat d(x,a)-\hat d(a,y)+(\hat d(a,x)+\hat d(x,\alpha)-\hat d(a,\alpha))\\
&=\hat d(u,v)-\hat d(y,v)-\hat d(a,y)+\hat d(x,\alpha)-\hat d(a,\alpha)
\end{aligned}
$$
and we obtain a final contradiction
$
\hat d(u,v)\le \hat d(u,\alpha)+d(\alpha,a)+\hat d(a,y)+\hat d(y,v)<\hat d(u,v),
$
witnessing that $\hat d(x,y)-\check d(x,y)\ge\delta>0$.
\end{proof}

\begin{claim}\label{cla:flap2} For every distinct elements $f,g\in\F$ and points $x\in V_f\setminus V_p$ and $y\in V_g\setminus V_p$ we have $$\hat d(x,y)-\check d(x,y)\ge \delta\defeq\min\{\Lambda(x;V_f\cap V_p),\Lambda(y;V_p\cap V_f)\}.$$
\end{claim}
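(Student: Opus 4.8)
The plan is to argue by contradiction, following the template of Claim~\ref{cla:flap1} but accounting for the fact that now \emph{both} endpoints lie outside $V_p$, in the two distinct glued pieces $V_f$ and $V_g$. Suppose $\hat d(x,y)-\check d(x,y)<\delta$. Since $x\in V_f$ and $y\in V_g$ with $f\ne g$, Lemma~\ref{l:formula} (equivalently Claim~\ref{cla:8}) expresses $\hat d(x,y)$ as an infimum over pairs of cut vertices, so the strict inequality $\hat d(x,y)<\check d(x,y)+\delta$ furnishes vertices $a\in V_f\cap V_p$ and $b\in V_g\cap V_p$ with
$$\hat d(x,a)+\hat d(a,b)+\hat d(b,y)<\check d(x,y)+\delta,$$
where I use Claims~\ref{cla:2} and \ref{cla:3} to rewrite $\hat f$, $\hat p$, $\hat g$ as $\hat d$ on the relevant pieces. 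Here the natural $y$-side quantity is $\Lambda(y;V_g\cap V_p)$, the cut set of the piece actually containing $y$.

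First I would dispose of the degenerate case $\check d(x,y)=0$. Then all three nonnegative summands are $<\delta$; in particular $\hat d(x,a)<\delta$ and $\hat d(b,y)<\delta$. Taking $a'=b'=a$ (resp.\ $a'=b'=b$) in the definition of $\Lambda$ gives $\hat d(x,a)\ge\tfrac12\Lambda(x;V_f\cap V_p)$ and $\hat d(b,y)\ge\tfrac12\Lambda(y;V_g\cap V_p)$, and each of these contradicts the definition of $\delta$, exactly as in the corresponding step of Claim~\ref{cla:flap1}.

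For the main case $\check d(x,y)>0$ I would unwind the definition of $\check d(x,y)$ to produce an edge $uv\in E_d$ witnessing
$$\hat d(u,v)-\hat d(u,x)-\hat d(v,y)>\hat d(x,a)+\hat d(a,b)+\hat d(b,y)-\delta.$$
Since $E_d=\bigcup_{h\in\{p\}\cup\F}E_h$, the edge $uv$ lies in $V_h$ for some $h\in\{p\}\cup\F$, and I would split into the four cases $h=p$, $h=f$, $h=g$, and $h\notin\{p,f,g\}$. In each case the strategy mirrors Claim~\ref{cla:flap1}: I route a $\hat d$-path from $u$ to $v$ through the appropriate cut vertices (invoking Lemma~\ref{l:formula} together with Claims~\ref{cla:7}, \ref{cla:2}, \ref{cla:3} to approximate $\hat d(u,x)$ or $\hat d(v,y)$ by such a routed path), substitute the displayed estimate, and then collapse the chain by the triangle inequality for $\hat d$ and the relevant $\Lambda$-bound to reach the impossible $\hat d(u,v)<\hat d(u,v)$; recall $d(uv)=\hat d(u,v)$ since $uv\in E_d$ and $d$ is a graph pseudometric by Lemma~\ref{l:gp}. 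This contradiction forces $\hat d(x,y)-\check d(x,y)\ge\delta$.

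The new feature relative to Claim~\ref{cla:flap1}, and the step I expect to be most delicate, is the genuinely extra case $h\notin\{p,f,g\}$ (together with the two symmetric cases $h=f$ and $h=g$): here the witnessing edge sits in a fourth, unrelated piece, so approximating $\hat d(x,u)$ via Lemma~\ref{l:formula} forces me to introduce a further cut vertex $\alpha\in V_f\cap V_p$ (or, on the $y$-side, some $\beta\in V_g\cap V_p$) and to check that the $\Lambda(x;V_f\cap V_p)$-bound (resp.\ $\Lambda(y;V_g\cap V_p)$-bound) still telescopes correctly against the extra term $\hat d(a,\alpha)$ (resp.\ $\hat d(b,\beta)$). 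The underlying inequalities are the same triangle/convexity estimates already used in Claim~\ref{cla:flap1}; the real work is the bookkeeping of which cut vertex belongs to which piece and ensuring that the amount $\delta$ absorbed in each case is dominated by the correct one of the two $\Lambda$-terms.
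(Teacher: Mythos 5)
Your outline is essentially the paper's own argument: contradiction, extraction of cut vertices $a\in V_f\cap V_p$, $b\in V_p\cap V_g$ from Lemma~\ref{l:formula}, a separate treatment of $\check d(x,y)=0$, and in the main case a witnessing edge $uv\in E_h$ whose defect is killed by rerouting one side of the path through further cut vertices and absorbing the slack into a $\Lambda$-term. You also correctly read $\Lambda(y;V_p\cap V_f)$ in the statement as $\Lambda(y;V_p\cap V_g)$. One organizational difference: the paper does not split into the four cases $h=p$, $h=f$, $h=g$, $h\notin\{p,f,g\}$, but only into ``$h\ne f$'' and ``$h\ne g$'', at least one of which holds because $f\ne g$; this automatically tells you which side to reroute (the $x$-side when $h\ne f$, the $y$-side when $h\ne g$) and handles $h=p$ and $h\notin\{p,f,g\}$ without separate discussion. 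Your four-way split works but duplicates effort.

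There is one step that does not close as you wrote it: the degenerate case $\check d(x,y)=0$. From $\hat d(x,a)\ge\tfrac12\Lambda(x;V_f\cap V_p)\ge\tfrac12\delta$ and $\hat d(x,a)<\delta$ you get no contradiction, and likewise for the $y$-term, so it is not true that ``each of these contradicts the definition of $\delta$.'' You must \emph{add} the two lower bounds: $\hat d(x,a)+\hat d(b,y)\ge\tfrac12\Lambda(x;V_f\cap V_p)+\tfrac12\Lambda(y;V_p\cap V_g)\ge\min\{\Lambda(x;V_f\cap V_p),\Lambda(y;V_p\cap V_g)\}=\delta$, which contradicts $\hat d(x,a)+\hat d(a,b)+\hat d(b,y)<\delta$. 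This is exactly what the paper does (via $\delta\le 2\min\{\hat d(x,a),\hat d(b,y)\}\le\hat d(x,a)+\hat d(b,y)$), and it is the reason the present claim can use $\Lambda(y;\cdot)$ rather than $\tfrac12\Lambda(y;\cdot)$ as in Claim~\ref{cla:flap1}: with both endpoints outside $V_p$ you get two halves to sum, whereas in Claim~\ref{cla:flap1} only one endpoint contributes. The fix is one line, but as stated the step is invalid rather than merely terse.
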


\begin{proof} To derive a contradiction, assume that $\hat d(x,y)-\check d(x,y)<\delta$. By Lemma~\ref{l:formula}, there exist vertices $a\in V_f\cap V_p$ and $b\in V_p\cap V_g$ such that $$\hat d(x,a)+\hat d(a,b)+\hat d(b,y)<\check d(x,y)+\delta.$$
If $\check d(x,y)=0$, then we obtain a desired contradiction as
$$
\begin{aligned}
\delta&=\min\{\Lambda(x;V_f\cap V_p),\Lambda(y;V_p\cap V_g)\}\le\min\{\hat d(a,x)+\hat d(x,a)-\hat d(a,a),\hat d(b,y)+\hat d(y,b)-\hat d(b,b)\}\\
&=2\min\{\hat d(x,a),\hat d(b,y)\}\le \hat d(x,a)+\hat d(a,b)+\hat d(b,y)<\check d(x,y)+\delta=\delta.
\end{aligned}
$$

So, we assume that $\check d(x,y)>0$. In this case the inequality $\check d(x,y)>\hat d(x,a)+\hat d(a,b)+\hat d(b,y)-\delta$ implies the existence of vertices $u,v\in V_d$ such that $uv\in E_d$ and $$\hat d(u,v)-\hat d(u,x)-\hat d(v,y)>\hat d(x,a)+\hat d(a,b)+\hat d(b,y)-\delta.$$
Since $uv\in E_d=\bigcup_{h\in\{p\}\cup\F}E_h$, there exists $h\in\{p\}\cup\F$ such that $uv\in E_h$. Since $f\ne g$, either $h\ne f$ or $h\ne g$.

First assume that $h\ne f$. Since $\hat d(x,u)<\hat d(u,v)-\hat d(v,y)-\hat d(x,a)-\hat d(a,b)-\hat d(b,y)+\delta$, by Lemma~\ref{l:formula}, there exist vertices $\alpha\in V_f\cap V_p$ and $\beta\in V_p\cap V_h$ such that 
$$\begin{aligned}
&\hat d(x,\alpha)+\hat d (\alpha,\beta)+\hat d(\beta,u) <\hat d(u,v)-\hat d(v,y)-\hat d(x,a)-\hat d(a,b)-\hat d(b,y)+\delta\\
&\le \hat d(u,v)-\hat d(v,y)-\hat d(x,a)-\hat d(a,b)-\hat d(b,y)+\Lambda(x;V_f\cap V_p)\\
&\le  \hat d(u,v)-\hat d(v,y)-\hat d(x,a)-\hat d(a,b)-\hat d(b,y)+(\hat d(a,x)+\hat d(x,\alpha)-\hat d(a,\alpha)).
\end{aligned}
$$
Then we obtain a desired contradiction as
$$
\hat d(u,v)\le \hat d(u,\beta)+\hat d(\beta,\alpha)+\hat d(\alpha,a)+\hat d(a,b)+\hat d(b,y)+\hat d(y,v)<\hat d(u,v).$$

By analogy we can derive a contradiction assuming that $h\ne  g$.
\end{proof}

\begin{lemma}\label{l:floppy} 
Assume that $p$ is a full pseudometric and every graph pseudometric $f\in\F$ is floppy and for every $x\in V_f\setminus V_p$ and $y\in V_p\setminus V_f$ the numbers $\Lambda(x;\Lambda_f\cap\Lambda_p)$ and $\Lambda(y;\Lambda_p\cap\Lambda_f)$. Then the partial pseudometric $d$ is floppy.
\end{lemma}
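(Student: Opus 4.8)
The plan is to verify the defining inequality $\check d(x,y)<\hat d(x,y)$ for every doubleton $xy\in[V_d]^2\setminus E_d$, splitting into cases according to the positions of $x$ and $y$ relative to $V_p$ and the ``petals'' $V_f\setminus V_p$, $f\in\F$. Since $p$ is a full pseudometric, $E_p=[V_p]^2$, so $x$ and $y$ cannot both lie in $V_p$ (else $xy\in E_p\subseteq E_d$); hence at least one of them lies in some $V_f\setminus V_p$. I would organize the verification into four cases: (B2) $x\in V_f\setminus V_p$ and $y\in V_p\setminus V_f$; (D) $x\in V_f\setminus V_p$ and $y\in V_g\setminus V_p$ with $f\ne g$; and the two \emph{same-component} cases in which $x,y\in V_f$ with $xy\notin E_f$, namely (B1) $x\in V_f\setminus V_p$, $y\in V_f\cap V_p$, and (C) $x,y\in V_f\setminus V_p$.

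The cross cases (B2) and (D) follow immediately from the work already done: Claim~\ref{cla:flap1} bounds $\hat d(x,y)-\check d(x,y)$ below by $\min\{\Lambda(x;V_f\cap V_p),\tfrac12\Lambda(y;V_p\cap V_f)\}$ in case (B2), and Claim~\ref{cla:flap2} bounds it below by $\min\{\Lambda(x;V_f\cap V_p),\Lambda(y;V_p\cap V_g)\}$ in case (D). In each case both relevant values of $\Lambda$ are positive by the hypothesis of the lemma (applied to the petal of the corresponding point), so the right-hand side is strictly positive and floppiness holds at $xy$.

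The real work, and the step I expect to be the main obstacle, is the same-component case (B1)+(C), which is \emph{not} covered by the two claims. Here $x,y\in V_f$, and since they are not both in $V_p$ I may assume $x\in V_f\setminus V_p$, so $\Lambda(x;V_f\cap V_p)>0$ by hypothesis. By Claim~\ref{cla:6} one has $\hat d(x,y)=\hat f(x,y)$ and $\hat d(a,x)=\hat f(a,x)$ for $a\in V_f$; hence $\ddot d(ab,xy)=\ddot f(ab,xy)$ for edges $ab\in E_f$, so the contribution of such edges to $\check d(x,y)=\sup_{ab\in E_d}\max\{0,d(ab)-\ddot d(ab,xy)\}$ sums to exactly $\check f(x,y)$, which is strictly less than $\hat f(x,y)$ because $f$ is floppy and $xy\notin E_f$. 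Splitting $E_d=E_p\cup E_f\cup\bigcup_{g\in\F\setminus\{f\}}E_g$, it then remains to bound the contribution of a ``foreign'' edge $ab\in E_p\cup\bigcup_{g\ne f}E_g$.

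The key idea for the foreign edges is that every geodesic from such an edge to $x$ or to $y$ must pass through the gateway $V_f\cap V_p$, as encoded by the formula of Lemma~\ref{l:formula} (Claims~\ref{cla:7} and~\ref{cla:8}). Expressing $\hat d(a,x)$ and $\hat d(b,y)$ through near-optimal gateway points, using that $p,f,g$ are graph pseudometrics (so $d(ab)=\hat d(a,b)$) together with the agreement $\hat p=\hat f=\hat g$ on the overlaps, repeated triangle inequalities for $\hat p$ (and $\hat g$) collapse the estimate to
$$d(ab)-\hat d(a,x)-\hat d(b,y)\le\hat f(\alpha,\alpha')-\hat f(\alpha,x)-\hat f(\alpha',y)$$
for suitable $\alpha,\alpha'\in V_f\cap V_p$ (and symmetrically for the other orientation). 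Positivity of $\Lambda(x;V_f\cap V_p)$ then gives $\hat f(\alpha,\alpha')\le\hat f(\alpha,x)+\hat f(x,\alpha')-\Lambda(x;V_f\cap V_p)$, which combined with $\hat f(x,\alpha')-\hat f(\alpha',y)\le\hat f(x,y)$ bounds each foreign contribution by $\hat f(x,y)-\Lambda(x;V_f\cap V_p)$. Taking the supremum over all edges yields
$$\check d(x,y)\le\max\bigl\{\check f(x,y),\ \hat f(x,y)-\Lambda(x;V_f\cap V_p)\bigr\}<\hat f(x,y)=\hat d(x,y),$$
finishing the case. The delicate point is the two-layer routing for an edge in a foreign petal $E_g$ ($g\ne f$), where one must first push the estimate to $V_g\cap V_p$ and then to $V_f\cap V_p$; once the formula of Lemma~\ref{l:formula} is available this is careful bookkeeping of triangle inequalities rather than a genuinely new idea.
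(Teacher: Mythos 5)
Your proposal is correct, and it agrees with the paper's proof in its overall architecture: the impossibility of $x,y\in V_p$ (since $p$ is full), the reduction of the two cross--component cases to Claims~\ref{cla:flap1} and~\ref{cla:flap2}, and the recognition that the same--component case $x,y\in V_f$, $xy\notin E_f$, is the part needing a separate argument. Where you genuinely diverge is in that last case. The paper fixes $\delta=\hat f(x,y)-\check f(x,y)>0$, argues by contradiction from $\check d(x,y)>\hat d(x,y)-\delta$, extracts a near--optimal witnessing edge $uv\in E_h$, and (for $h\ne f$) routes through gateway points $a,b\in V_f\cap V_p$ so as to compare the result with $\check f(x,y)$; the hypothesis on $\Lambda$ is not invoked there at all. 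You instead split the supremum defining $\check d(x,y)$ over $E_f$--edges (whose contribution is exactly $\check f(x,y)<\hat f(x,y)$, by Claim~\ref{cla:6}) and foreign edges, and you bound each foreign contribution by $\hat f(\alpha,\alpha')-\hat f(\alpha,x)-\hat f(\alpha',y)\le \hat f(x,y)-\Lambda(x;V_f\cap V_p)$, consuming the positivity of $\Lambda(x;V_f\cap V_p)$. This is a real difference, and your route is arguably the more robust one: the paper's concluding display in its subcase $h\ne f$ invokes ``the definition of $\check f(x,y)$'' for the quantity $f(ab)-\hat f(a,x)-\hat f(b,y)$ where $a,b$ are gateway vertices of $V_f\cap V_p$ that need not form an edge of $E_f$ (and the inequality $\hat f(a,b)-\hat f(a,x)-\hat f(b,y)\le\check f(x,y)$ can fail for non--edges), whereas your estimate only uses the triangle inequality for $\hat f$ together with the $\Lambda$--hypothesis, both of which apply to arbitrary pairs in $V_f\cap V_p$. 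The price you pay is that your bound for the same--component case requires $\Lambda(x;V_f\cap V_p)>0$ even when $y\in V_f$, which is an extra use of the hypothesis compared with the paper's intent, but it is a hypothesis the lemma grants you anyway. One small point worth making explicit when you write this up: in subcase (B1), where $y\in V_f\cap V_p$, you should take $\alpha'=y$ itself as the gateway for $y$, so that the term $\hat f(\alpha',y)$ vanishes and the estimate $\hat f(\alpha,y)\le\hat f(\alpha,x)+\hat f(x,y)-\Lambda(x;V_f\cap V_p)$ still closes the case.
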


\begin{proof} To prove that the partial pseudometric $d$ is floppy, fix any points $x,y\in V_d$ with $xy\notin E_d$. We have to prove that $\check d(x,y)<\hat d(x,y)$.

Since $x,y\in V_d=\bigcup_{f\in\{p\}\cup\F}V_f$, there exist $f,g\in \{p\}\cup\F$ such that $x\in V_f$ and $y\in V_g$. Two cases are possible.

1. First we assume that $f\ne g$. This case has four subcases.

1.1. If $x\notin V_p$ and $y\in V_p$, then by Claim~\ref{cla:flap1},
$$\hat d(x,y)-\check d(x,y)\ge \min\{\Lambda(x;V_f\cap V_p),\tfrac12\Lambda(y;V_p\cap V_f)\}>0.$$

1.2. If $x\in V_p$ and $y\notin V_p$, then by Claim~\ref{cla:flap1},
$$\hat d(x,y)-\check d(x,y)=\hat d(y,x)-\check d(y,x)\ge \min\{\Lambda(y;V_g\cap V_p),\tfrac12\Lambda(x;V_p\cap V_g)\}>0.$$

1.3. If $x\notin V_p$ and $y\notin V_p$, then by Claim~\ref{cla:flap2},
$$\hat d(x,y)-\check d(x,y)\ge \min\{\Lambda(x;V_f\cap V_p),\Lambda(y;V_g\cap V_p)\}>0.$$

1.4. If $x\in V_p$ and $y\in V_p$, then  since $p$ is a full pseudometric, $xy\in [V_p]^2\subseteq E_p\subseteq E_d$, which contradicts the choice of $xy$. So, this case is impossible.
\smallskip

2. Next, assume that $f=g$. Since the graph pseudometric $f$ is floppy, the real number $\delta\defeq \hat f(x,y)-\check f(x,y)$ is positive.  It remains to prove that $\hat d(x,y)-\check d(x,y)\ge \delta$. If $\hat d(x,y)=0$, then by Claim~\ref{cla:3}, $\hat d(x,y)-\check d(x,y)=\hat d(x,y)=\hat f(x,y)=\delta+\check f(x,y)\ge \delta$. So, assume that $\check d(x,y)>0$. 

To derive a contradiction, assume that $\hat d(x,y)-\check d(x,y)<\delta$. 
Then $\check d(x,y)>\hat d(x,y)-\delta$ and by the definition of $\check d(x,y)>0$, there exist elements $u,v\in V_d$ such that $uv\in E_d$ and $$\hat d(u,v)-\hat d(u,x)-\hat d(v,y)>\hat d(x,y)-\delta.$$ Since $uv\in E_d=\bigcup_{h\in\{p\}\cup\F}E_h$, there exists $h\in\{p\}\cup\F$ such that $uv\in E_h$ and hence $uv\subseteq V_h$. Now consider two cases.

2.1. If $h=f$, then using Claim~\ref{cla:3}, the triangle inequality for the graph pseudometric $\hat f$ and the definition of the number $\check f(x,y)$, we obtain a contradiction as
$$
\begin{aligned}
\check f(x,y)&\ge\hat f(u,v)-\hat f(u,x)-\hat f(v,y)= \hat d(u,v)-\hat d(u,x)-\hat d(v,y)\\
&>\hat d(x,y)-\delta=\hat f(x,y)-(\hat f(x,y)-\check f(x,y))=\check f(x,y).
\end{aligned}
$$

2.2. If $h\ne f$, then by Lemma~\ref{l:formula} and the strict inequality $\hat d(u,x)+\hat d(v,y)<\hat d(u,v)-\hat d(x,y)+\delta$, there exist points $\alpha,\beta\in V_h\cap V_p$ and $a,b\in V_f\cap V_p$ such that 
$$(\hat h(u,\alpha)+\hat p(\alpha,a)+\hat f(a,x))+(\hat h(v,\beta)+\hat p(\beta,b)+\hat f(b,y))<\hat d(u,v)-\hat d(x,y)+\delta.$$
Using Claims~\ref{cla:2}, \ref{cla:3}, the triangle inequality for the pseudometric $\hat d$, and the definition of $\check f(x,y)$, we obtain a contradiction
$$\begin{aligned}
\hat f(x,y)&\ge f(ab)-\hat f(a,x)-\hat f(b,y)=d(ab)-\hat d(a,x)-\hat d(b,y)=\hat d(a,b)-\hat d(a,x)-\hat d(b,y)\\
&\ge \big(\hat d(u,v)-\hat d(u,\alpha)-\hat d(\alpha,a)-\hat d(v,\beta)-\hat d(\beta,b)\big)-\hat d(a,x)-\hat d(b,y)\\
&= \hat d(u,v)-(\hat h(u,\alpha)+\hat p(\alpha,a)+\hat f(a,x))-(\hat h(v,\beta)+\hat p(\beta,b)+\hat f(b,y))\\
&>\hat d(x,y)-\delta=\hat f(x,y)-(\hat f(x,y)-\check f(x,y))=\check f(x,y),
\end{aligned}
$$
witnessing that $\hat d(x,y)-\check d(x,y)\ge \delta>0$.
\end{proof}
\newpage

\end{document}